\renewcommand{\baselinestretch}{\baselinestretch}
\renewcommand{\baselinestretch}{1.1}
\numberwithin{equation}{section}
\newtheorem{thm}{Theorem}[section]
\newtheorem{lem}[thm]{Lemma}
\newtheorem{cor}[thm]{Corollary}
\newtheorem{prop}[thm]{Proposition}
\theoremstyle{definition}
\theoremstyle{remark}
\numberwithin{equation}{section}
\newcommand{\asu}[1][\mathcal{A}_{d,m}]{\mathrm{ASU}(#1)}
\newcommand{\su}[1][\mathcal{A}_{d,m}]{\mathrm{SU}(#1)}
\newcommand{\mca}{{\mathcal{A}}}
\newcommand{\ra}{{\, \rightarrow \,}}
\newcommand{\gen}{\text{gen}}
\newcommand{\pgen}{\text{gen}^+}
\newcommand{\pspn}{\text{spn}^+}
\newcommand{\pcls}{\text{cls}^+}
\newcommand{\ord}{\text{ord}}
\newcommand{\z}{{\mathbb Z}}
\newcommand{\q}{{\mathbb Q}}
\newcommand{\n}{{\mathbb N}}
\newcommand{\Mod}[1]{\ (\mathrm{mod}\ #1)}
\newcommand{\legendre}[2]{\left( \frac{#1}{#2} \right)}
\newcommand{\spnnorm}[1]{\theta\left(O^+({#1})\right)}
\newenvironment{newenum}
{\begin{enumerate}[label={\rm(\arabic*)}]}
	{\end{enumerate}}
\begin{document}

% Remove or comment out any unused author tags.
% author one information

\author{Daejun Kim}
\address{Department of Mathematics Education, Korea University, Seoul 02841, Republic of Korea}
\email{daejunkim@korea.ac.kr}
\thanks{This research was supported by the College of Education, Korea University Grant in 2024.}

% Use this \subjclass if you are using amsart version 2.0 (December 1999).
%\subjclass[2000]{11E12, 11E20}
% Use this one if you are using an older version of amsart.
\subjclass[2020]{11E12, 11E20, 11E25}
%\date{}

\keywords{Sums of squares, Almost square universality, Square Universality}

\thanks{}
%\dedicatory{}

% at present the "communicated by" line appears only in ERA, PROC and JAG
%\commby{}

\title[]{Sums of squares of integers from residue classes}

\begin{abstract} 
    A subset $\mathcal{A}\subseteq\mathbb{Z}$ is called $s$-almost square universal if every sufficiently large positive integer can be written as a sum of at most $s$ squares of integers from $\mathcal{A}$. In this article, we study the minimal number $\mathrm{ASU}(\mathcal{A}_{d,m})$ with this property, where $\mca_{d,m}$ denotes the residue class of $d$ modulo $m$, with $m\in\n$ and $d\in\mathbb{Z}$. We further prove that $\mathcal{A}_{d,m}$ is $s$-square universal for some $s\in\mathbb{N}$ if and only if $d \equiv \pm 1 \Mod{m}$, and determine the minimal such number $\mathrm{SU}(\mathcal{A}_{d,m})$ in these cases.
\end{abstract}

\maketitle

\section{Introduction}

Waring’s problem asks whether, for a given integer $k\ge2$, there exists a number $r$ such that every positive integer is a sum of at most $r$ $k$-th powers of positive integers. In 1770, the same year that Edward Waring posed his conjecture, Lagrange proved the Four Square Theorem, which states that every positive integer is a sum of at most four squares of integers. This theorem, though proved independently, resolved Waring’s problem in the special case $k=2$. In general, the existence of such a number was proved by Hilbert \cite{Hil} in 1909, and the determination of the smallest such number $g(k)$ has been extensively studied.

Building upon this, a closely related and indeed well-studied refinement considers the smallest number $G(k)$ such that every sufficiently large integer can be represented as a sum of at most $G(k)$ $k$-th powers of positive integers. For $k=2$, Legendre's Three Square Theorem implies that no positive integer of the form $4^c(8d+7)$ for integers $c,d$, can be expressed as a sum of three squares of integers. Thus, $g(2)=G(2)=4$. For more on these invariants $g(k)$ and $G(k)$, we refer the reader to the survey notes \cite{E} and \cite{VW}, and the references therein.

In what follows, we focus on the case $k=2$, that is, sums of squares. This naturally leads to the study of positive definite integral quadratic forms and their universality properties. A classical example is Ramanujan’s classification \cite{Ram} of 55 diagonal quaternary quadratic forms that are {\it universal}---that is, quadratic forms representing all positive integers. Dickson \cite{Di1} refined this list by removing the form $x^2+2y^2+5z^2+5w^2$, which fails to represent $15$. More recently, Conway and Schneeberger proved the celebrated 15-Theorem, which gives a complete criterion for universality: a positive definite integral quadratic form is universal if and only if it represents the integers $1,2,3,5,6,7,10,14,$ and $15$. Bhargava \cite{B} later gave a simplified proof and, together with Hanke \cite{BH}, extended the result to the 290-Theorem for integer-valued quadratic forms.

Beyond classical universality results, there has been considerable interest in representations by sums of squares with restricted variables, such as restrictions to almost primes. Br\"udern and Fouvry \cite{BrudernFouvry} proved that every sufficiently large integer
$n\equiv4\Mod{24}$ can be expressed as a sum of four squares of integers, each having at most 34 prime factors, and Blomer and Br\"udern \cite{BlomerBrudern} obtained a corresponding result for three squares.

Motivated by these developments, a natural generalization is to consider sums of squares drawn from specific subsets $\mca\subseteq\z$, leading to a Waring-type problem. We say $\mca$ is {\it $s$-square universal} (resp. {\it $s$-almost square universal}) if every positive integer (resp. every sufficiently large integer) can be written as a sum of at most $s$ squares from $\mca^2$. The minimal such $s$, if it exists, is denoted by $\mathrm{SU}(\mca)$ (resp. $\mathrm{ASU}(\mca)$). 

For example, in \cite{KO}, this problem was studied for $\mca=\z\setminus p\z$ with each prime $p$, where it is shown that 
\[
\su[\z\setminus 2\z]=10, \ \su[\z\setminus 3\z]=6, \ \su[\z\setminus 5\z]=5, \text { and } \su[\z\setminus p\z]=4 \text{ for any prime } p\ge 7.
\]
Although these stated results concern square universality, their proofs in fact establish the corresponding almost square universality results as well:
\[
\asu[\z\setminus 2\z]=10, \  \asu[\z\setminus 3\z]=6, \ \asu[\z\setminus p\z]=4 \text{ for any prime }p\ge5. 
\]
More recently, the case $\mca=S_\rho:=\z\setminus\{\pm\rho\}$ with $\rho\in\n$ was studied in \cite{CJKKOY}. Regarding almost square universality, it is shown that $\asu[S_\rho]=4$ for all $\rho\in\n$. For square universality, one may observe that $\su[S_1]$ does not exist, since $S_1$ excludes $\{\pm1\}$. For other cases, it was proved that
\[
    \su[S_\rho]=\begin{cases}
        8 & \text{if } \rho=2,\\
        6 & \text{if } \rho=3,\\
        5 & \text{if } \rho=5,\ 2^{m+1}, \ 3\cdot 2^{m} \text{ for some }m\in\n,\\
        4 & \text{otherwise}.\\
    \end{cases}
\]

In this article, we study the above concepts for the set $\mca=\mca_{d,m}$, the residue class of $d$ modulo $m$, where $m\in\n$ and $d\in\z$. Under the natural assumption that $m$ and $d$ are relatively prime---since otherwise $\asu$ cannot exist---we obtain the following main result characterizing almost square universality.

\begin{thm}\label{thm:asu-main}
For $m\in\n$ and $d\in\z$ with $(m,d)=1$, we have
    \[
        \asu=\begin{cases}
        	6 & \text{if } m=3,\\
            m+4 & \text{if } m\equiv 1\Mod{2} \text{ with } m\ge 5,\\
            4m+2 & \text{if } m\equiv 2\Mod{4},\\
            2m+2 & \text{if } m\equiv 0\Mod{4} \text{ and } m\not\equiv0\Mod{3},\\
            2m+3 & \text{if } m\equiv 0\Mod{12}.
        \end{cases} 
    \]
\end{thm}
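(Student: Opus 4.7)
The plan is to prove matching upper and lower bounds in each of the four cases, with the common tool being the substitution $x_i = d + m y_i$ for $y_i \in \z$, which converts $n = \sum_{i=1}^{s'} x_i^2$ into
\[
n = s' d^2 + 2dm \sum_{i=1}^{s'} y_i + m^2 \sum_{i=1}^{s'} y_i^2,
\]
so that representability of $n$ by $s'$ elements of $\mca_{d,m}^2$ becomes a joint existence problem for the integers $T := \sum y_i$ and $Q := \sum y_i^2$.

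For the lower bound I would identify the local obstructions. Reducing the displayed identity modulo $m$ forces $n \equiv s' d^2 \Mod{m}$; when $m$ is even, $d$ must be odd and every $x_i$ is odd, so $x_i^2 \equiv 1 \Mod{8}$ and $n \equiv s' \Mod{8}$ in addition. By CRT these merge into a congruence $n \equiv s' c \Mod{M}$ with $c$ a unit, where $M$ equals $m$, $4m$, $2m$, $2m$ in the four cases, respectively. Hence $s_0 := \asu$ must at least be large enough that $\{s' c \bmod M : 1 \le s' \le s_0\}$ covers $\z/M\z$, and moreover every residue class must contain an $s'$ that actually represents all large $n$ in its class. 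Since representations with $s' \in \{1,2\}$ are restrictive (squares and restricted sums of two squares), every residue class needs a backup $s' \ge 3$; and $s' = 3$ itself may fail on the three-square obstruction $n = 4^a(8b+7)$. For $m$ odd this obstruction applies (no mod-$8$ constraint), forcing backups for $s' \in \{1,2,3\}$ and yielding $\asu \ge m+3$; for $m$ even, $s' = 3$ gives $n \equiv 3 \Mod{8}$, safely avoiding $4^a(8b+7)$, so only $s' \in \{1,2\}$ need backups, yielding $\asu \ge M+2$. In the case $12 \mid m$, an additional $3$-adic obstruction on the restricted three-square representation makes $s' = 3$ fail on certain arithmetic progressions as well, pushing the bound up by one to $2m+3$.

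For the upper bound, given large $n$ I would pick $s' \in [1, s_0]$ satisfying $n \equiv s' c \Mod{M}$ and the obstruction-avoidance requirements; the structure of $s_0$ guarantees the existence of such an $s'$. Matching $n \equiv s'd^2 + 2dmT \Mod{m^2}$ fixes $T$ modulo $m$, and the shift $T \mapsto T + m$ then moves $Q$ by $-2d$, supplying essential flexibility. When $s' \ge 4$, the Hadamard identity
\[
4 \sum_{i=1}^{4} y_i^2 - \Bigl(\sum_{i=1}^{4} y_i\Bigr)^2 = (y_1 + y_2 - y_3 - y_4)^2 + (y_1 - y_2 + y_3 - y_4)^2 + (y_1 - y_2 - y_3 + y_4)^2
\]
reduces the existence of $(y_1, \ldots, y_4)$ with prescribed $(T, Q)$ to representing $4Q - T^2$ as $A^2 + B^2 + C^2$ with $A \equiv B \equiv C \equiv T \Mod{2}$, which is solvable by Gauss's three-square theorem once $4Q - T^2$ is sufficiently large and avoids the class $4^a(8b+7)$; varying $T$ by $m$ and, when $s' > 4$, setting the extra $y_i$'s as a free parameter clears the finitely many local obstructions. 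For the $s' = 3$ representations used in the $m$-even regimes, I would invoke the spinor-genus-level universality of the relevant inhomogeneous ternary lattice attached to integers $\equiv d \Mod{m}$.

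The principal technical obstacle I anticipate is the $s' = 3$ analysis in the $m$-even, $3 \nmid m$ cases: proving that every sufficiently large $n$ with $n \equiv 3 d^2 \Mod{M}$ is genuinely a sum of three squares of elements of $\mca_{d,m}$ requires a spinor-genus argument on a shifted ternary sum-of-squares lattice with careful local computations at $2$, and it is precisely this step that explains why the correction drops from $+3$ to $+2$ in these cases and, conversely, why a residual obstruction at $3$ restores the $+3$ when $12 \mid m$. The remaining ingredients—basic CRT bookkeeping, the quaternary reduction via the Hadamard identity, and the classical three-square obstruction for $m$ odd—are then essentially routine to assemble case by case.
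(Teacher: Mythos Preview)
Your plan is essentially the paper's: the substitution $x_i = my_i + d$ and the resulting congruence bookkeeping match Proposition~\ref{prop:almost-univ}(1); your Hadamard identity is exactly the content of Lemma~\ref{lem:GenCauchy} (the generalized Cauchy lemma) and drives the upper bound in Proposition~\ref{prop:almost-univ}(2); your lower-bound analysis via failures at $s'\in\{1,2,3\}$ is Proposition~\ref{prop:almost-univ}(3); and your appeal to spinor-genus universality of the shifted ternary lattice for the $s'=3$ step when $m$ is even is precisely Theorems~\ref{thm:num-pspn} and~\ref{thm:asu-ternary}.

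One correction you should make before executing the plan: your description of the $12\mid m$ lower bound as a ``$3$-adic obstruction'' failing on ``certain arithmetic progressions'' is not right, and taken literally that route would fail. There is no local obstruction at $3$ (or anywhere)---the integers in question are locally represented by the ternary coset at every prime. What actually happens is that for $12\mid m$ the proper genus of the coset $L+d\nu$ splits into two proper spinor genera (Theorem~\ref{thm:num-pspn}(2)), and the integers not represented are spinor exceptions: they have the shape $n=3\ell^2$ for primes $\ell$ in a fixed residue class mod $12$ (Theorem~\ref{thm:exceptional}, cited from Haensch--Kane). This is a thin set, not an arithmetic progression, and exhibiting infinitely many such $n$ with $n\equiv 3d^2\pmod m$ requires the global spinor-exception input together with Dirichlet's theorem, not a $3$-adic computation. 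Since you already intend to use spinor-genus theory for the positive direction this is probably just loose phrasing on your part, but the lower bound for $12\mid m$ genuinely rests on this non-local ingredient.
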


Moreover, we characterize when $\mca_{d,m}$ is $s$-square universal for some $s\in\n$, and determine the exact value of $\su$ in those cases. 

\begin{thm}\label{thm:su-main}
The set $\mca_{d,m}$ is $s$-square universal for some $s\in\n$ if and only if $d\equiv\pm 1\Mod{m}$. Furthermore, for such integers $d$, we have
    \[
        \su[\mca_{d, m}]=\begin{cases}
            10 & \text{if } m=2,4,\\
            6 & \text{if } m=3,\\
            16 & \text{if } m=5,\\
            26 & \text{if } m=6,\\
            m^2-2m & \text{if } m\ge7.\\
        \end{cases}
    \]    
\end{thm}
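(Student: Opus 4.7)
The plan is to handle the characterization and the quantitative formula separately. For the "only if" direction I would study a hypothetical representation $1 = a_1^2 + \cdots + a_s^2$ with $a_i \in \mathcal{A}_{d,m}$: since the summands are non-negative integers totaling $1$, exactly one $a_i^2$ equals $1$ and the rest vanish, forcing $\pm 1 \in \mathcal{A}_{d,m}$, that is $d \equiv \pm 1 \Mod{m}$ (with the degenerate case $m=1$ trivially included). For the "if" direction I note that $\mathcal{A}_{-d,m} = -\mathcal{A}_{d,m}$, so I may assume $d = 1$ and work throughout with $\mathcal{A}_{1,m}$, the set of integers $\equiv 1 \Mod{m}$.

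The key congruence input is that $a^2 \equiv 1 \Mod{m}$ for every $a \in \mathcal{A}_{1,m}$, which forces $s \equiv n \Mod{m}$ in any representation; and when $m$ is even every such $a$ is odd, giving the additional constraint $s \equiv n \Mod{8}$. Combined with the trivial lower bound that each summand is at least $1$, these congruences produce the witnesses for the various lower bounds.

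For the main case $m \ge 7$, I would take $n = m(m-2)$ as the witness for the lower bound: the constraint $s \equiv 0 \Mod{m}$ together with $(m-1)^2 > n$ rules out any square other than $1$, forcing $s = n = m^2 - 2m$. For the matching upper bound I would split into three ranges: for $n \le m(m-2)$, using $n$ ones gives $s = n \le m^2 - 2m$; for moderate $n$, writing $n = k(m-1)^2 + t$ with $t$ ones and choosing $k$ maximal gives $s = n - k\,m(m-2)$, which remains below $m^2 - 2m$ up to $n \le m(m-2)(m-1)^2$, with $(m+1)^2$ available to patch residue-class gaps; and for $n$ beyond this threshold Theorem~\ref{thm:asu-main} directly yields $s \le \mathrm{ASU}(\mathcal{A}_{1,m}) \ll m^2 - 2m$.

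For the remaining cases $m \in \{2,3,4,5,6\}$, I expect each to require its own short case analysis. For $m \in \{2,4\}$ the set $\mathcal{A}_{1,m}^2$ coincides with the set of odd squares, and I would use $n = 42$ as the witness, since the mod-$8$ constraint combined with the unavailability of a two-square representation forces $s \ge 10$; the case $m = 3$ essentially reduces to $\mathrm{SU}(\z \setminus 3\z) = 6$ from \cite{KO}; and for $m \in \{5,6\}$ I would identify explicit witnesses (for instance $n = 31$ for $m = 5$ and $n = 266$ for $m = 6$) and verify the matching upper bounds by combining direct constructions with Theorem~\ref{thm:asu-main}. The main technical obstacle throughout will be the upper bound in the intermediate range of $n$: exhibiting a representation with $s \le \mathrm{SU}(\mathcal{A}_{1,m})$ for every $n$ past the trivial threshold but before Theorem~\ref{thm:asu-main} becomes applicable, which requires careful case analysis across congruence classes modulo $m$ (and modulo $24$ when $m$ is even).
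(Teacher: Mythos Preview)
Your overall architecture mirrors the paper's: the ``only if'' direction via representing $1$, the lower bound via $n=m^2-2m$, elementary constructions using $1$'s, $(m-1)^2$'s, and $(m+1)^2$'s for small and moderate $n$, and the almost-universality result for large $n$. Your witnesses for $m\in\{2,4,5,6\}$ are correct.

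The genuine gap is the bridge between your moderate range and the large range. Invoking Theorem~\ref{thm:asu-main} ``directly'' gives nothing: it is an ineffective statement about sufficiently large $n$. What you actually need is an \emph{effective} bound, and the only one available in the paper prior to Section~\ref{sec:su} is Proposition~\ref{prop:almost-univ}(2), whose threshold is roughly $\tfrac{25}{4}m^4$ for odd $m$. Your moderate-range construction (using only $1$, $(m-1)^2$, $(m+1)^2$) cannot reach that far: with at most $m^2-2m$ terms each of size at most $(m+1)^2$, the sum is bounded by about $m^4$, and the gap-patching by a single $(m+1)^2$ already breaks down once $k$ exceeds $m^2-6m+1$. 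The paper closes this gap by proving a \emph{sharper} effective result (Lemma~\ref{lem:rep-1,m-effective}): for $m\ge 6$ and every $n\ge 2m(m-1)^2$, one can write $n$ as a sum of at most $Mm+3$ squares in $\mathcal{A}_{1,m}^2$. The key device is the generalized Cauchy lemma (Lemma~\ref{lem:GenCauchy}), choosing four variables $x_1,\dots,x_4$ so that $\sum x_i^2=a$ and $\sum x_i=1$ for a suitable odd $a$, and then adjusting the residue class of $a$ by setting a further batch $x_5,\dots,x_r\in\{0,\pm 1\}$. This is the missing idea in your plan; without it the intermediate range does not close.

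Two smaller points: first, the elementary patching argument (your moderate range, the paper's Lemma~\ref{lem:rep-small}) only works cleanly for $m\ge 8$; the case $m=7$ must be handled separately by combining Proposition~\ref{prop:almost-univ}(2) with a finite check. Second, for $m=6$ the effective bound gives only $Mm+3=27$ squares, not $26$, so an additional argument is required: one must show that every $n\equiv 3\Mod{24}$ is a sum of three squares from $\mathcal{A}_{1,6}^2$, which the paper does by appealing to the classical three-squares theorem together with a primitivity argument modulo $3$.
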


This article is organized as follows. In Section \ref{sec:prelim}, we review preliminary results for quadratic spaces, lattices, and lattice cosets. Section \ref{sec:asu} is devoted to proving almost square universality of $\mca_{d,m}$ and Theorem \ref{thm:asu-main}. Finally, Section \ref{sec:su} addresses the proof of square universality, establishing Theorem \ref{thm:su-main}.

\section{Preliminaries}\label{sec:prelim}

In this section, we introduce notation and terminology of quadratic spaces, lattices, and lattice cosets, and their theta series.

\subsection{Quadratic spaces and lattices} 
A {\it quadratic space} over $\q$ is a triple $(V,B,Q)$ of a $\q$-vector space $V$ with a quadratic map $Q:V\ra \q$ and the associated bilinear form $B:V\times V \ra \q$ defined by $B(x,y)=\frac{1}{2}(Q(x+y)-Q(x)-Q(y))$. We say that $V$ is {\it positive definite} if $Q(x)>0$ for all nonzero elements $x\in V$. Throughout this article, we assume that all quadratic spaces $V$ are positive definite.

A {\it quadratic $\z$-lattice} on $V$ is a free $\z$-module $L=\z e_1+\cdots +\z e_r$ of rank $r$ such that $\q L=V$. The $r\times r$ symmetric matrix $M_L:=B(e_i,e_j)$ is called the {\it Gram matrix} of $L$ with respect to the basis $e_1,\ldots,e_r$, and we write $L\cong M_L$. If the Gram matrix $M_L$ is diagonal, we simply write $L\cong \langle Q(e_1),\ldots, Q(e_r)\rangle$. The quadratic form associated to $L$ is defined by $Q_L(x_1,\ldots, x_r):=\bm{x}^t M_L \bm{x}$ where $\bm{x}=(x_1,\ldots,x_r)^t$. The {\em discriminant} $d_L$ of $L$ is the determinant of the Gram matrix $M_L$, and the {\em level} $N_L$ is defined to be the smallest positive integer $N$ such that $NM_L^{-1}$ has coefficients in $\z$. We define the {\em localization} of $V$ and $L$ at any prime spot $p$ by $V_p:= V\otimes_\q \q_p$ and $L_p=L\otimes_\z \z_p$.

For two $\z$-lattices $\ell$ and $L$, we say that $\ell$ is represented by $L$ if there is a linear map $\sigma:\ell \ra L$ such that $B(\sigma x,\sigma y) = B(x,y)$ for all $x,y\in \ell$. Such a linear map is called a representation from $\ell$ to $L$, and we write $\ell\ra L$ if there exists one. If $\ell\ra L$ and $L\ra \ell$, then we say that they are isometric, and write $\ell\cong L$. 

These notations extend naturally to $\z_p$-lattices $\ell_p$ and $L_p$. We say that $\ell$ is {\it locally represented} by $L$ if $\ell_p\ra L_p$ for all primes $p$. The {\it genus} $\gen(L)$ of $L$ is defined by
\[
    \gen(L):=\{K \text{ on } \q L \mid K_p\cong L_p \text{ for all primes }p\}.
\]
The isometry induces an equivalence relation on $\gen(L)$, and the number of equivalence classes is called the {\it class number} of $L$.

It is well-known that if $\ell$ is locally represented by $L$, then there is a $\z$-lattice $L'\in\gen(L)$ such that $\ell\ra L'$ (see \cite[102:5]{OMBook}). Consequently, if $L$ is of class number one, then every locally represented $\ell$ is also globally represented by $L$. 

Utilizing this fact together with a detailed study of the representation of binary $\z$-lattice $\ell \cong \left(\begin{smallmatrix}4&b\\b&a\end{smallmatrix}\right)$ by the quaternary $\z$-lattice $L\cong\langle 1,1,1,1\rangle$, a generalization of Cauchy’s lemma was established in \cite{Kim}. This lemma plays a crucial role in our arguments. For earlier versions of related results, see \cite{N1}.

\begin{lem}\label{lem:GenCauchy}
	For $a,b\in\z$, the system of equations
	\begin{equation}\label{eqn:Cauchy}
	    \begin{cases}
			x_1^2+x_2^2+x_3^2+x_4^2=a\\
			x_1+x_2+x_3+x_4=b
		\end{cases}
	\end{equation}
	has a solution $x_1,x_2,x_3,x_4\in\z$ if and only if
	\begin{equation}\label{eqn:CauchyCond}
	    a\equiv b\Mod{2}, \quad 4a-b^2\ge0 \quad \text{and} \quad 4a-b^2\neq 4^\alpha(8\beta+7) \text{ for any $\alpha,\beta\in\z$}.
	\end{equation}
\end{lem}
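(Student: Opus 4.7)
The plan is to prove both directions via Legendre's Three Square Theorem, with everything hinging on one identity. Adjoining the row $(1,1,1,1)$ that defines $b$ to the three rows
\[
u = x_1+x_2-x_3-x_4,\quad v = x_1-x_2+x_3-x_4,\quad w = x_1-x_2-x_3+x_4
\]
produces a $4\times 4$ Hadamard matrix $H$ with $HH^T = 4I_4$, giving the algebraic identity
\[
u^2 + v^2 + w^2 + b^2 = 4(x_1^2+x_2^2+x_3^2+x_4^2), \qquad \text{i.e.,} \qquad u^2+v^2+w^2 = 4a-b^2.
\]

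For necessity I would read off all three conditions of (\ref{eqn:CauchyCond}) from an assumed solution: $a\equiv b\pmod 2$ comes from $x^2\equiv x\pmod 2$; $4a\ge b^2$ follows from Cauchy--Schwarz (or trivially from the identity); and, since the identity realizes $4a-b^2$ as a sum of three integer squares, Legendre's Three Square Theorem forbids $4a-b^2 = 4^\alpha(8\beta+7)$.

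For sufficiency I would invert the Hadamard transform. Given (\ref{eqn:CauchyCond}), Legendre supplies $u,v,w \in \z$ with $u^2+v^2+w^2 = 4a-b^2$, and I would set
\[
x_1 = \tfrac{b+u+v+w}{4},\ x_2 = \tfrac{b+u-v-w}{4},\ x_3 = \tfrac{b-u+v-w}{4},\ x_4 = \tfrac{b-u-v+w}{4}.
\]
The Hadamard structure guarantees that, as rational numbers, these $x_i$ satisfy $\sum x_i = b$ and $\sum x_i^2 = a$; the only substantive task, which I expect to be the main (mild) obstacle, is to select $u,v,w$ so that each $x_i$ is an \emph{integer}. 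A short calculation reduces integrality to the two congruences $b\equiv u\equiv v\equiv w\pmod 2$ and $b+u+v+w\equiv 0\pmod 4$. The common parity is automatic from $u^2+v^2+w^2 = 4a-b^2$ together with $a\equiv b\pmod 2$: if $b$ is even then $4a-b^2\equiv 0\pmod 4$ forces $u,v,w$ even, and if $b$ is odd then $a$ is odd, $4a-b^2\equiv 3\pmod 4$, forcing $u,v,w$ odd.

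For the mod-$4$ congruence I would split into cases. When $b$ is even, writing $b=2b'$ and $u=2u'$ etc.\ turns the identity into $a = b'^2+u'^2+v'^2+w'^2$, and $a$ even gives $b'+u'+v'+w'\equiv 0\pmod 2$ for free. When $b$ is odd I would exploit the freedom to replace $(u,v,w)$ by $(\pm u,\pm v,\pm w)$: mod $4$ each of $b,u,v,w$ lies in $\{\pm 1\}$, and for either value of $b$ mod $4$ a direct enumeration shows that exactly $4$ of the $8$ sign patterns make $b+u+v+w\equiv 0\pmod 4$. Selecting any such pattern yields integer $x_i$ and completes the proof. The alternative route indicated in the introduction---representing the binary lattice $\bigl(\begin{smallmatrix}4&b\\b&a\end{smallmatrix}\bigr)$ by $\langle 1,1,1,1\rangle$ and invoking class number one---ends up doing the same 2-adic bookkeeping, now phrased as distinguishing the two $\mathrm{Aut}(\langle 1,1,1,1\rangle)$-orbits of norm-$4$ vectors.
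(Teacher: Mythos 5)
Your proof is correct, but it takes a genuinely different route from the paper. The paper's own proof is essentially a citation: for $4a-b^2>0$ it invokes \cite[Lemma 4.2]{Kim}, which was obtained by studying representations of the binary lattice $\left(\begin{smallmatrix}4&b\\ b&a\end{smallmatrix}\right)$ by the class-number-one lattice $\langle 1,1,1,1\rangle$, and the only argument actually written out in this paper is the degenerate case $4a-b^2=0$, where one checks directly that $b\equiv 0\pmod 4$ and the constant solution $x_i=b/4$ works. You instead give a self-contained elementary proof in the spirit of Nathanson's proof of Cauchy's lemma (the paper's reference \cite{N1}), based on the Hadamard identity $b^2+u^2+v^2+w^2=4\sum x_i^2$ together with Legendre's three-square theorem; your parity and sign-flip analysis (all of $b,u,v,w$ even when $b$ is even, with $b'+u'+v'+w'\equiv a\equiv 0\pmod 2$ automatic; all odd when $b$ is odd, with signs of $u,v,w$ adjustable so that $u+v+w\equiv -b\pmod 4$) is exactly the 2-adic bookkeeping needed to invert the transform integrally, and it checks out. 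A pleasant side benefit of your argument is that it treats $4a-b^2=0$ uniformly (take $u=v=w=0$), whereas the paper must handle it separately because \cite[Lemma 4.2]{Kim} assumes $4a-b^2>0$; what the lattice-theoretic route buys in exchange is a framework (local representations plus class number one) that generalizes beyond this particular identity, which is how the cited lemma was originally proved.
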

\begin{proof}
    % As the proof is only outlined in \cite[Lemma 4.2]{Kim}, we provide the proof here in detail.
    Since the case when $4a-b^2>0$ has been proved in \cite[Lemma 4.2]{Kim}, let us assume that $4a-b^2=0$. First, $b$ should be even since $4a=b^2$. Hence $a$ is also even as $a\equiv b\Mod{2}$. Writing $a=2a'$ and $b=2b'$, we have $2a'=(b')^2$, so $b'=2b''$ for some $b''\in\z$ and $a=4(b'')^2$. Thus, $x_i=b''$ is a solution to \eqref{eqn:Cauchy}.
\end{proof}

\begin{lem}\label{lem:GenCauchy5}
	For $a,b\in\z$, the system of equations
	\begin{equation}\label{eqn:Cauchy5}
		\begin{cases}
			x_1^2+x_2^2+x_3^2+x_4^2+x_5^2=a\\
			x_1+x_2+x_3+x_4+x_5=b
		\end{cases}
	\end{equation}
	has a solution $x_1,x_2,x_3,x_4,x_5\in\z$ if and only if $a\equiv b\Mod{2}$ and $5a-b^2\ge0$.
\end{lem}
\begin{proof}
	The ``only if" part follows from the fact that $x_i^2\equiv x_i\Mod{2}$ and the Cauchy--Schwartz inequality. Hence it suffices to prove the ``if" part. If $5a=b^2$, then $b=5b'$ for some $b'\in\z$, and hence $a=5b'^2$. In this case, setting $x_i=b'$ for all $i$ gives a solution. We therefore assume that $5a-b^2>0$. 
	
	Let $\ell=\z v_1+\z v_2\cong \left(\begin{smallmatrix}5&b\\b&a\end{smallmatrix}\right)$ and $L=\z e_1+ \cdots + \z e_5 \cong \langle 1,1,1,1,1 \rangle$ be $\z$-lattices. Then $\ell$ is locally represented by $L$ since their ranks differ by $3$ and $L_p$ is unimodular for every prime $p$. Thus $\ell$ is globally represented by $L$ as the class number of $L$ is one. Hence there exists a representation $\sigma:\ell \ra L$. 
	
	By changing the sign of the basis vectors $e_i$ and reordering them if necessary, we may assume that 
	\[
		\sigma(v_1)=e_1+e_2+e_3+e_4+e_5 \quad \text{ or } \quad \sigma(v_1)=2e_1+e_5.
	\]
	Write $\sigma(v_2)=\sum_{i=1}^5 y_i e_i$ for some $y_i\in\z$. Then $\sum_{i=1}^5 y_i^2=B(\sigma(v_2),\sigma(v_2))=B(v_2,v_2)=a$. In the former case, we have $\sum_{i=1}^5 y_i=B(\sigma(v_1),\sigma(v_2))=B(v_1,v_2)=b$, and hence $x_i=y_i$ gives a solution. 
	
	In the latter case, consider the sublattice $K=\z w_1+\z w_2\subseteq L$, where $w_1=2e_1$ and $w_2=y_1e_1+y_2e_2+y_3e_3+y_4e_4$. Since $K\cong \left(\begin{smallmatrix}4&b-y_5\\b-y_5&a-y_5^2\end{smallmatrix}\right)$ is a sublattice of $L$, it is positive semi-definite. Hence its discriminant $d_K=4(a-y_5^2)-(b-y_5)^2\ge0$. Since $K\subseteq \z e_1+\cdots+\z e_4\cong \langle 1,1,1,1\rangle$ in addition, $d_K$ is not of the form $4^\alpha(8\beta+7)$ for any $\alpha,\beta\in\z$. Finally, as $a-y_5^2\equiv b-y_5\Mod{2}$, Lemma \ref{lem:GenCauchy} yields $x_1,x_2,x_3,x_4\in\z$ satisfying
	\[
		\begin{cases}
			x_1^2+x_2^2+x_3^2+x_4^2=a-y_5^2\\
			x_1+x_2+x_3+x_4=b-y_5.
		\end{cases}
	\]
	Setting $x_5=y_5$ then gives a solution. This completes the proof of the lemma.
\end{proof}

\subsection{Quadratic lattice cosets} By a {\it lattice coset} or simply a {\it coset}, we mean a coset $L+\nu$ in $V/L$, where $\nu \in V$. The smallest integer $a$ such that $a\nu\in L$ is called the {\it conductor} of $L+\nu$. For example, let
\[
    L=\z e_1 + \z e_2 + \z e_3\cong \langle m^2,m^2,m^2\rangle \quad \text{and} \quad \nu=\frac{1}{m}(e_1+e_2+e_3),
\]
where $m\in\n$. Then the conductor of $L+d\nu$ is $m$ for any $d\in\z$ with $(m,d)=1$. Moreover, note that any vector $y\in L+d\nu$ is written as $y=x+d\nu$ with $x=x_1e_1+x_2e_2+x_3e_3\in L$ and
\[
    Q(y)=Q(x+d\nu)=Q\left(\sum_{i=1}^3 \left(x_i+\frac{d}{m} \right) e_i \right) = \sum_{i=1}^3 m^2\left(x_i+\frac{d}{m} \right)^2 = \sum_{i=1}^3 \left(mx_i+ d \right)^2.
\]
Thus, writing $n$ as a sum of three squares in $\mca_{d,m}^2$ is equivalent to finding a vector $y$ in $L+d\nu$ with $Q(y)=n$. We note that $L+\nu=L$ if and only if $\nu\in L$, equivalently, if the conductor of $L+\nu$ is $1$.

\subsection{Algebraic structures of lattice cosets}
Our approach requires an understanding of algebraic structures of lattice cosets: proper classes, proper genera, and  proper spinor genera. To introduce these concepts, let us denote the orthogonal group $O(V)$ and the special orthogonal group $O^+(V)$ of $V$ by
\[
    O(V):=\{\sigma\in \mathrm{GL}(V) \mid B(\sigma x , \sigma y)=B(x,y) \text{ for all }x,y\in V \} \quad \text{and} \quad O^+(V):=O(V)\cap \mathrm{SL}(V).
\]
Let $\theta:O^+(V)\rightarrow \q^\times/(\q^\times)^2$ be the {\em spinor norm map} (cf. \cite[$\S 55$]{OMBook}) and denote its kernel by
\[
O'(V)=\{\sigma\in O^+(V) : \theta(\sigma)=1\}.
\]
Let $O_A^+(V)$  and $O_A'(V)$ be the {\em ad{\'e}lizations} of $O^+(V)$ and $O'(V)$, respectively. Define
$$
O^+(L+\nu)=\{\sigma\in O^+(V) : \sigma(L+\nu)=L+\nu\} \quad \text{and} \quad o^+(L+\nu)=|O^+(L+\nu)|.
$$
Similarly, the groups $O^+(L_p+\nu)$ for any prime $p$ and $O_A^+(L+\nu)$ are defined analogously.

Let $X=L+\nu$ be a coset. The {\it proper class} $\pcls(X)$ of $X$ is defined as the orbit of $X$ under the action of $O^+(V)$. By \cite[Lemma 4.2]{ChanOh13}, the adelic group $O_A^+(V)$ acts on $X$, and the orbit of $X$ under this action $O_A^+(V)$ is called the {\it proper genus} of $X$, denoted by $\pgen(X)$. The {\it proper spinor genus} $\pspn(X)$ of $X$ is defined to be the orbit of $X$ under the action of the group $O^+(V)O_A'(V)$. Clearly, we have
$$
\pcls(X)\subseteq \pspn(X) \subseteq \pgen(X).
$$
Moreover, \cite[Proposition 2.5]{Xu} shows that the number of proper spinor genera in $\pgen(L+\nu)$ is given by
\begin{equation}\label{eqn:numberofproperspinorgenera}
\left[I_\q : \q^\times \prod\limits_{p\in \Omega} \spnnorm{L_p+\nu}\right],
\end{equation}
where $I_\q$ is the id{\`e}le group and $\Omega$ is the set of all places of $\q$, including the infinite place $\infty$.

\subsection{Modular forms}

We briefly introduce modular forms of half-integral weight below. For further details, we refer the reader to \cite{OnoBook}. A function $f:\mathbb{H}\ra \mathbb{C}$ satisfies a {\it modularity of weight $\kappa\in\frac{1}{2}+\z$ on a congruence subgroup $\Gamma\subseteq \Gamma_0(4)$ with character $\chi$} if $f|_\kappa=\chi(d)f$ for every $\gamma=\left(\begin{smallmatrix} a&b\\c&d \end{smallmatrix}\right) \in \Gamma$. Here the slash operator is defined by
\[
    f|_{\kappa}\gamma (z)= \legendre{c}{d}\varepsilon_d^{2\kappa}(cz+d)^{-\kappa}f(\gamma z),
\]
where $\legendre{c}{d}$ denotes the Kronecker symbol and $\varepsilon_d=1$ if $d\equiv 1 \pmod{4}$, $\varepsilon_d=i$ if $ d\equiv 3 \pmod{4}$. We call $f(z)$ a {\it holomorphic modular form} if it satisfies the modularity condition, is holomorphic on $\mathbb{H}$, and grows at most polynomially in $y$ as $z=x+iy\rightarrow \q\cup \{i\infty\}$. If, in addition, $f(z)\rightarrow 0$ as $z\rightarrow \q\cup \{i\infty\}$, then we call $f$ a {\it cusp form}. If $\left(\begin{smallmatrix} 1&1\\0&1 \end{smallmatrix}\right) \in \Gamma$, then a modular form $f(z)$ on $\Gamma$ admits the following Fourier expansion:
\[
    f(z)=\sum_{n\ge0} a_f(n)q^n, \quad \text{where }q:=e^{2\pi iz}.
\]

The space of modular forms (resp. cusp forms) of weight $\kappa$, character $\chi$, and congruence subgroup $\Gamma$ is denoted by $M_{\kappa}(\Gamma,\chi)$ (resp. $S_{\kappa}(\Gamma,\chi)$).
The space of {\em Eisenstein series}, denoted by $E_{\kappa}(\Gamma,\chi)$, is defined as the orthogonal complement of $S_{\kappa}(\Gamma,\chi)$ in $M_{\kappa}(\Gamma,\chi)$ with respect to the Petersson inner product.

\subsection{Theta series for cosets}
Let $X=L+\nu$ be a coset on a quadratic space $V$ of rank $k$, and assume throughout that $B(X,X)\subseteq \z$. For a positive integer $n$, we define
\[
R_X(n):=\{x\in X : Q(x)=n\} \quad \text{and} \quad r_X(n):= |R_X(n)|,
\]
and the theta series $\Theta_{X}(z)$ of the coset $X$ is defined as
\[
\Theta_{X}(z):=\sum\limits_{x\in X} q^{Q(x)} = \sum\limits_{n=0}^\infty r_X(n) q^n.
\]
Using \cite[Proposition 2.1]{Shimura}, one may show that $\Theta_X$ is a modular form of weight $\frac{k}{2}$ when $X$ is of rank $k$.

% \begin{prop}
% 	Let $X=L+\nu$ be a coset on a quadratic space $V$ of odd rank $k$ and conductor $a$. Let $N_L$ be the level of $L$ and $d_L$ the discriminant of $L$. Then 
%     \[
% 	   \Theta_{X}(z)\in  M_{k/2}(\Gamma_0(4N_L)\cap \Gamma_1(a), \chi_{4d_L}), 
% 	\]
%     where $\chi_d(\cdot)=\legendre{d}{\cdot}$ is the character  obtained from the Kronecker symbol. 
% \end{prop}
% \begin{proof}
%     A detailed proof can be found in \cite[Proposition 2.3]{KaneKim}.
% \end{proof}

The theta series $\Theta_{\pgen(X)}(z)$ of $\pgen(X)$ and its Fourier coefficients $r_{\pgen(X)}(n)$ are defined by
\begin{equation}\label{defn-thetaofpropergenus}
	\Theta_{\pgen(X)}(z)=\sum\limits_{n=0}^\infty r_{\pgen(X)}(n)q^n:=\left(\sum\limits_{Y\in\pgen(X)}\frac{1}{o^+(Y)}\right)^{-1}\cdot \left(\sum\limits_{Y\in\pgen(X)}\frac{\Theta_{Y}(z)}{o^+(Y)}\right).
\end{equation}
In the same manner, we define the theta series $\Theta_{\pspn(X)}(z)$ of $\pspn(X)$ and $r_{\pspn(X)}(n)$ by
\begin{equation}\label{defn-thetaofproperspinorgenus}
	\Theta_{\pspn(X)}(z)=\sum\limits_{n=0}^\infty r_{\pspn(X)}(n) q^n:= \left(\sum\limits_{Y\in\pspn(X)}\frac{1}{o^+(Y)}\right)^{-1}\cdot \left(\sum\limits_{Y\in\pspn(X)}\frac{\Theta_{Y}(z)}{o^+(Y)}\right).
\end{equation}
Here, the summations run over a complete set of representatives of proper classes in $\pgen(X)$ or $\pspn(X)$, respectively.

\section{Almost square universality of $\mca_{d,m}$}\label{sec:asu}

In this section, we discuss the almost square universality of $\mca_{d,m}$ and prove Theorem \ref{thm:asu-main}. We first investigate necessary conditions and establish effective almost square universality.
\begin{prop}\label{prop:almost-univ}
	Let $m,d\in\n$ with $d \le m$ and $(m,d)=1$. For each $n\in\n$, write $n=mt+r_0d^2$ with unique integers $t$ and $0\le r_0 < m$. Define $M=1$ if $m\equiv 1\Mod{2}$; $M=4$ if $m\equiv 2\Mod{4}$; and $M=2$ if $m\equiv 0\Mod{4}$. 
    \begin{newenum}
        \item If $n$ can be written as a sum of $r$ squares in $\mca_{d,m}^2$, then $t\equiv \frac{r-r_0}{m} \Mod{M}$.
        
        \item For $m\equiv 1\Mod{2}$, every $n\ge \frac{m^4}{5}+md^2$ is a sum of at most $m+4$ squares in $\mca_{d,m}^2$.
        
        \item For $m\equiv 0 \Mod{2}$, every $n\ge \frac{m^4}{M^2}+Mmd^2$ is a sum of at most $Mm+3$ squares for $\mca_{d,m}^2$.
%    \[
%    N =\begin{cases}
%            \frac{25}{4}m^4+Md^2 & \text{if } m\equiv 1\Mod{2},\\
%            \frac{m^4}{16}+Mmd^2 & \text{if } m\equiv 2\Mod{4},\\
%            \frac{m^4}{4}+Mmd^2 & \text{if } m\equiv 0\Mod{4}.
%        \end{cases}
%    \]        
    
    \item If $\mca_{d,m}$ is $s$-almost square universal, then
        \[
        s \ge 
        \begin{cases}
            m+4 & \text{if } m\equiv 1\Mod{2} \text{ with } m\ge5,\\
            Mm+2 & \text{if } m\equiv 0\Mod{2}.
        \end{cases}
        \]
    \end{newenum}
\end{prop}

\begin{proof}
    (1) We first investigate necessary conditions for $n$ to be expressed as a sum of squares in $\mca_{d,m}^2$. Suppose that $n$ is written as below and note that
    \begin{equation}\label{eqn:equiv1}
        n=\sum_{i=1}^r (mx_i+d)^2 \quad \iff \quad mt+r_0d^2 = m\left(m\sum_{i=1}^r x_i^2 + 2d\sum_{i=1}^rx_i \right) + rd^2. 
    \end{equation}
    Therefore, we necessarily have $r\equiv r_0 \Mod{m}$, and hence \eqref{eqn:equiv1} is equivalent to 
    \begin{equation}\label{eqn:equiv2}
        t-\frac{r-r_0}{m}\cdot d^2 = m\sum_{i=1}^r x_i^2 + 2d\sum_{i=1}^rx_i.
    \end{equation}
    Writing $\sum_{i=1}^r x_i^2=a$ and $\sum_{i=1}^r x_i=b$, we have
    \[
        a\equiv b\Mod{2} \quad \text{and} \quad t-\frac{r-r_0}{m}\cdot d^2=ma+2db.
    \]
    Noting that $(m,d)=1$ and that $d\equiv 1\Mod{2}$ when $m\equiv 0\Mod{2}$, it follows that 
    \[
        \{ma+2db \mid a,b\in\z, \ a\equiv b \Mod{2}\}=M\z.
    \]
    Therefore, we necessarily have $t\equiv \frac{r-r_0}{m}\cdot d^2\equiv \frac{r-r_0}{m} \Mod{M}$.\\[5pt]
    (2) We choose $r\in\n$ to be the unique integer satisfying $5\le r \le m+4$ and $r\equiv r_0\Mod{m}$, as required by part (1) of the proposition. For $n\ge N$, we claim that there exist integers $a,b\in\z$ satisfying
    \begin{equation}\label{eqn:CondRep5}
    	t-\frac{r-r_0}{m}\cdot d^2=ma+2db, \quad  a\equiv b\Mod{2} \quad \text{and} \quad 5a-b^2\ge0.
    \end{equation}
    Once we prove the claim, by Lemma \ref{lem:GenCauchy5}, there exist  $x_1,x_2,x_3,x_4,x_5\in\z$ such that $\sum_{i=1}^5 x_i^2=a$ and $\sum_{i=1}^5 x_i=b$. Setting  $x_6=\cdots=x_r=0$, equations \eqref{eqn:equiv1} and \eqref{eqn:equiv2} then show that $n=\sum_{i=1}^r (mx_i+d)^2$.
    
    To prove the claim, for simplicity, set
    \[
    	\tilde{t}:=t-\frac{r-r_0}{m}d^2.
    \]
    Note that if $(a,b)=(a_0,b_0)$ is an arbitrary integer solution to $\tilde{t}=ma+2db$, then all integer solutions $(a,b)$ to this equation are given by 
    \[
    	(a_k,b_k)=(a_0-2dk,b_0+mk), \quad k\in\z.
    \]
    Moreover,
    \begin{align}
    	5a-b^2=5\cdot \frac{\tilde{t}-2db}{m}-b^2\ge 0 &\iff mb^2+10db-5\tilde{t}\le 0 \nonumber \\ 
    	&\iff b \in \left[-\frac{5d}{m}-\frac{\sqrt{25d^2+5m\tilde{t}}}{m},-\frac{5d}{m}+\frac{\sqrt{25d^2+5m\tilde{t}}}{m}\right]. \label{eqn:b-interval5}
    \end{align}
    Since $n=mt+r_0d^2=m\tilde{t}+rd^2\ge \frac{1}{5}m^4+md^2$, we obtain
    \[
    5m\tilde{t}\ge m^4+5(m-r)d^2 \ge m^4-25d^2.
    \]
    This implies that the length of the interval in \eqref{eqn:b-interval} is at least $2m$. Therefore, there exist at least two consecutive integer solutions $(a,b)$ of $\tilde{t}=ma+2db$ satisfying $5a-b^2\ge0$, one of which necessarily satisfies $a\equiv b\Mod{2}$. Hence, one can find a pair $(a,b)=(a_k,b_k)$ that satisfies \eqref{eqn:CondRep5}, as claimed.\\[5pt]
    (3) For $n\ge N$, we show that there exist integers $a,b\in\z$ and an integer $4\le r\le Mm+3$ such that
    \begin{equation}\label{eqn:CondRep}
        t-\frac{r-r_0}{m}\cdot d^2=ma+2db, \ \  a\equiv b\Mod{2}, \ \ 4a-b^2\ge0 \ \text{ and } \ 4a-b^2\neq 4^\alpha(8\beta+7) \text{ ($\alpha,\beta\in\z$)}.
    \end{equation}
    Since $a$ and $b$ satisfy \eqref{eqn:CauchyCond}, Lemma \ref{lem:GenCauchy} implies that there exist $x_1,x_2,x_3,x_4\in\z$ such that $\sum_{i=1}^4 x_i^2=a$ and $\sum_{i=1}^4 x_i=b$. Setting  $x_5=\cdots=x_r=0$, equations \eqref{eqn:equiv1} and \eqref{eqn:equiv2} then show that $n=\sum_{i=1}^r (mx_i+d)^2$.
    
    We choose $r\in\n$ to be the unique integer with $4\le r \le Mm+3$ satisfying $t\equiv \frac{r-r_0}{m}\Mod{M}$, as required by part (1) of the proposition. To explain the choice of $a$ and $b$, as before, set
    \[
        \tilde{t}:=t-\frac{r-r_0}{m}d^2.
    \]
Note that $\tilde{t}\equiv 0\Mod{M}$ and $d\equiv1\Mod{2}$. Suppose that $(a,b)=(a_0,b_0)$ is an arbitrary integer solution of $\frac{\tilde{t}}{2}=\frac{m}{2}a+db$. Then all integer solutions $(a,b)$ are given by 
\[
(a_k,b_k)=\left(a_0-dk,b_0+\frac{m}{2}k\right) \text{ with } k\in\z.
\]
We claim that there exists some integer $0\le k \le \frac{8}{M}-1$ such that  $(a,b)=(a_k,b_k)$ satisfies the second and the last conditions in \eqref{eqn:CondRep}.

Assuming the claim, we now construct such a pair $(a,b)$. Since $a=\frac{\tilde{t}-2db}{m}$, we have 
\begin{align}
	4a-b^2=4\cdot \frac{\tilde{t}-2db}{m}-b^2\ge 0 &\iff mb^2+8db-4\tilde{t}\le 0 \nonumber \\ 
	&\iff b \in \left[-\frac{4d}{m}-\frac{2\sqrt{4d^2+m\tilde{t}}}{m},-\frac{4d}{m}+\frac{2\sqrt{4d^2+m\tilde{t}}}{m}\right]. \label{eqn:b-interval}
\end{align}
Since $n=m\tilde{t}+rd^2\ge \frac{m^4}{M^2}+Mmd^2$, we obtain
\[
m\tilde{t}\ge \frac{m^4}{M^2}+ (Mm-r)d^2 > \frac{m^4}{M^2}-4d^2.
\]
Hence, the length of the interval in \eqref{eqn:b-interval} is greater than $\frac{8}{M}\cdot \frac{m}{2}$. Therefore, there are at least $\frac{8}{M}$ consecutive integer solutions $(a,b)$ of $\frac{\tilde{t}}{2}=\frac{m}{2}a+db$ with $4a-b^2>0$, among which one can find a pair $(a,b)=(a_k,b_k)$ satisfying \eqref{eqn:CondRep} by the claim.

We now prove the claim. If $m\equiv2\Mod{4}$, then $\frac{\tilde{t}}{2}\equiv0\Mod{2}$. Hence $a_j\equiv b_j\Mod{2}$ for all $j$, and there exists some $i\in\{0,1\}$ such that $a_i\equiv b_i\equiv 1\Mod{2}$. For this $i$, we have $4a_i-b_i^2\equiv 3\Mod{8}$, and we may take $k=i$.

If $m\equiv0\Mod{4}$, then let $i\in\{0,1\}$ be such that $a_{i+2j}\equiv b_{i+2j}\Mod{2}$ for all $j\in\z$. Set $D_k=4a_k-b_k^2$. If $a_i\equiv b_i \equiv 1 \Mod{2}$, then $D_i\equiv 3\Mod{8}$, and we may take $k=i$. Otherwise, assume $a_i\equiv b_i\equiv 0\Mod{2}$, and write $a_i=2a_i'$ and $b_i=2b_i'$. Then $D_i\equiv0\Mod{4}$, and 
\begin{equation}\label{eqn:D-residue}
	\frac{D_i}{4}=2a_i'-b_i'^2\equiv \begin{cases}
		1\Mod{4} & \text{if } b_i'\equiv1\Mod{2} \text{ and } a_i'\equiv 1\Mod{2},\\
		3\Mod{8} & \text{if } b_i'\equiv1\Mod{2} \text{ and } a_i'\equiv 2\Mod{4},\\
		2\Mod{4} & \text{if } b_i'\equiv0\Mod{2} \text{ and } a_i'\equiv 1\Mod{2}.
	\end{cases}
\end{equation}
In these cases, we may again take $k=i$. The remaining cases are: 
\begin{enumerate}
	\item[(i)] $b_i'\equiv1\Mod{2}$ with $a_i'\equiv 0\Mod{4}$; and 
	\item[(ii)] $b_i'\equiv 0\Mod{2}$ with $a_i'\equiv 0\Mod{2}$. 
\end{enumerate}
	In case (i), since $D_i/4\equiv 7\Mod{8}$, we have
\[
D_{i+2}/4=D_i/4-\left[(2d+b_i'm)+\left(\frac{m}{2}\right)^2\right]\equiv 3-2\equiv1\Mod{4}.
\]
In case (ii), we have $a_{i+2}'=a_{i}'-d\equiv1\Mod{2}$ and $b_{i+2}'=b_{i}'+\frac{m}{2}\equiv 0\Mod{2}$., and hence $D_{i+2}/4=2a_{i+2}'-b_{i+2}'^2\equiv 2\Mod{4}$. Thus, in either case,  we may take $k=i+2$. This proves the claim and completes the proof of part (3).\\ [5pt]
(4) First, suppose that $m\equiv 0\Mod{2}$. Let $p$ be a prime such that $p\equiv 3\Mod{4}$ and $(p,mM)=1$. Consider a positive integer $n=mMt_0+2d^2$, where $t_0$ is a positive integer satisfying
\[
    mMt_0\equiv -2d^2 \Mod{p} \quad \text{and} \quad  mMt_0\not\equiv -2d^2 \Mod{p^2}.
\]
Since $\ord_p(n)=1$, the integer $n$ cannot be expressed as a sum of two squares of integers. On the other hand, by part (1), writing $n=mMt_0+2d^2$ as a sum of $r$ squares in $\mca_{d,m}^2$ requires that $r\equiv 2\Mod{Mm}$. Hence $r\ge mM+2$. Since there exist infinitely many such $n$, we conclude that $\asu\ge Mm+2$.

Now suppose that $m\equiv 1\Mod{2}$. We construct infinitely many positive integers $n_j=mt_j+4d^2$ for $j\ge 0$ that cannot be expressed as a sum of four squares in $\mca_{d,m}^2$. By part (1), expressing $n_j$ as a sum of elements of $\mca_{d,m}^2$ requires at least $m+4$ squares, which implies that $\asu\ge m+4$.

We first choose such an integer $n_0$ with $t_0\in 2\z$. We begin with the case when $0< d \le \frac{m-1}{2}$. Note that the three smallest squares in $\mca_{d,m}^2$ are $d^2$, $(-m+d)^2$, $(m+d)^2$, listed in increasing order. Consequently, the smallest two integers and the next several integers that can be expressed as sums of four squares in $\mca_{d,m}^2$ are 
\begin{center}
	$4d^2$, $3d^2+(-m+d)^2$, $2d^2+2(-m+d)^2$, $d^2+3(-m+d)^2$, $4(-m+d)^2$, and  $3d^2+(m+d)^2$.
\end{center}
Writing each of these integers in the form $mt+4d^2$, the corresponding values of $t$ are
\begin{center}
	$0$, $m-2d$, $2(m-2d)$, $3(m-2d)$, $4(m-2d)$, and  $m+2d$.
\end{center}
If $d<\frac{m-1}{2}$, then $m-2d\ge3$, and hence we may take $t_0=2$. If $d=\frac{m-1}{2}$, then $m+2d=2m-1\ge9$, so we may take $t_0=6$.

Next, consider the case $\frac{m+1}{2} \le d \le m-1$. Note that the three smallest squares in $\mca_{d,m}^2$ are $(-m+d)^2$, $d^2$, $(-2m+d)^2$, listed in increasing order. Hence, the smallest two and the next several integers that can be expressed as sums of four squares in $\mca_{d,m}^2$ are
\begin{center}
	$4(-m+d)^2$, $3(-m+d)^2+d^2$, $2(-m+d)^2+2d^2$, $(-m+d)^2+3d^2$, $4d^2$, and  $3(-m+d)^2+(-2m+d)^2$.
\end{center}
Writing each of these integers as $mt+4d^2$, the corresponding values of $t$ are
\begin{center}
	$4(m-2d)$, $3(m-2d)$, $2(m-2d)$, $m-2d$, $0$, and  $7m-10d$.
\end{center}
If $d>\frac{m+1}{2}$, then $m-2d\le -3$, and hence we may choose $t_0$ to be an even integer strictly between $4(m-2d)$ and $3(m-2d)$. If $d=\frac{m+1}{2}$, then $7m-10d=2m-5\ge5$, and we may take $t_0=2$.

For each $j\in\n$, we define
\[
	t_j:=\frac{4^{\ell j}(mt_0+4d^2)-4d^2}{m} \quad \text{and} \quad n_j:=mt_j+4d^2=4^{\ell j}(mt_0+4d^2),
\]
where $\ell:=\ord_m(2)=\ord_m\left(\frac{m+1}{2}\right)$. Since the numerator of $t_j$ is a multiple of $m$, we have $t_j\in\n$. 

We claim that $n_j$ cannot be written as a sum of four squares in $\mca_{d,m}^2$ for any $j$. Assume to the contrary that
\[
	n_j=\sum_{i=1}^{4} (mx_i+d)^2
\]
for some $x_i\in\z$. Since $n_j$ is a multiple of $8$ (as $t_0$ is even), it follows that $mx_i+d\equiv 0 \Mod{2}$ for all $i$. Noting that $\frac{mx_i+d}{2} \equiv d\cdot \frac{m+1}{2}\Mod{m}$, we have
\[
	4^{\ell j-1}(mt_0+4d^2)=\sum_{i=1}^{4} \left(my_i+d\cdot \frac{m+1}{2}\right)^2
\]
for some $y_i\in\z$. Iterating this argument $\ell-1$ more times, we obtain
\[
n_{j-1}=4^{\ell (j-1)}(mt_0+4d^2)=\sum_{i=1}^{4} \left(mz_i+d\cdot \left(\frac{m+1}{2}\right)^\ell\right)^2
\]
for some $z_i\in\z$. Since $\ord_{m}\left(\frac{m+1}{2}\right)=\ell$, we have 
\[
	n_{j-1}=\sum_{i=1}^{4} \left(mX_i+d\right)^2
\]
for some $X_i\in\z$. Repeating this procedure, we eventually obtain a representation of $n_0$ as a sum of four squares in $\mca_{d,m}^2$, contradicting the choice of $n_0$. This proves the claim and completes the proof.
\end{proof}

As a direct consequence of Proposition \ref{prop:almost-univ} and from the fact that $\asu[\mca_{d,3}]=\asu[\z\setminus 3\z]=6$ for any $d\in\z$ with $(d,3)=1$ by \cite{KO}, we obtain the following corollary concerning $\asu$.

\begin{cor}\label{cor:asu}
    Let $m\in\n$ and $d\in\z$ be such that $(m,d)=1$, and define $M=4$ if $m\equiv2\Mod{4}$ and $M=2$ if $m\equiv0\Mod{4}$. Then
    \[
        \asu=\begin{cases}
        	6 & \text{if } m=3,\\
            m+4 & \text{if }m\equiv 1\Mod{2},\\
            Mm+2 \text{ or } Mm+3 & \text{if }m\equiv 0\Mod{2}.
        \end{cases}
    \]
    Moreover, for $m\equiv0\Mod{2}$, we have $\asu=Mm+2$ if and only if every sufficiently large integer $n\equiv 3d^2\Mod{Mm}$ can be written as a sum of three squares in $\mca_{d,m}^2$.
\end{cor}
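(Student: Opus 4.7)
The plan is to read both bounds directly off Proposition~\ref{prop:almost-univ} and then isolate the single obstruction at $r=3$. For $m\equiv 1\Mod{2}$ we have $M=1$, so parts~(2) and~(3) of that proposition sandwich $\asu$ between $m+3$ and $m+3$, giving $\asu=m+3$. For $m\equiv 0\Mod{2}$, the same two parts give $Mm+2\le\asu\le Mm+3$, leaving only the dichotomy $\asu\in\{Mm+2,Mm+3\}$.

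For the ``moreover'' clause I would first unpack the congruence bookkeeping of part~(1). Writing $n=mt+r_0d^2$ with $0\le r_0<m$, part~(1) says that the set of $r$ for which $n$ admits a representation as a sum of exactly $r$ squares in $\mca_{d,m}^2$ forms (when nonempty) an arithmetic progression of common difference $Mm$; let $r_n^{\min}$ denote its smallest positive element. A short computation shows that the $r\in[4,Mm+3]$ produced by part~(2) equals $Mm+3$ precisely when $r_n^{\min}=3$, which is equivalent to $r_0\equiv 3\Mod{m}$ (i.e.\ $n\equiv 3d^2\Mod{m}$) together with the compatible necessary condition $t\equiv 0\Mod{M}$. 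In every other situation, either the $r$ furnished by part~(2) is already at most $Mm+2$, or $r_n^{\min}\in\{1,2\}$ and the representation given by part~(2) uses at most $r_n^{\min}+Mm\le Mm+2$ squares.

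The equivalence now follows by a pigeonhole on admissible lengths. If $\asu=Mm+2$, then every sufficiently large $n$ with $r_n^{\min}=3$ is a sum of at most $Mm+2$ squares from $\mca_{d,m}^2$; among the admissible lengths $3,\,3+Mm,\,3+2Mm,\ldots$ only $r=3$ satisfies the bound, so such an $n$ must in fact be a sum of exactly three squares. Conversely, if every sufficiently large $n\equiv 3d^2\Mod{m}$ (subject to the automatic condition on $t$) is a sum of three squares in $\mca_{d,m}^2$, then combined with part~(2) every sufficiently large $n$ is a sum of at most $Mm+2$ squares in $\mca_{d,m}^2$, whence $\asu\le Mm+2$ and equality follows from the lower bound.

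The only real care point I foresee is the congruence bookkeeping: one must verify that the single residue class forcing $r=Mm+3$ in part~(2) is exactly the one singled out by $n\equiv 3d^2\Mod{m}$, including checking that for $m\ge 4$ this forces $r_0=3$ while for $m=2$ it forces $r_0=1$, and that in each case the additional requirement $t\equiv 0\Mod{M}$ is absorbed into the phrase ``can be written as a sum of three squares.'' Once that identification is spelled out, no input beyond Proposition~\ref{prop:almost-univ} is required, and the corollary follows as a direct pigeonhole consequence.
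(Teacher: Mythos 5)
Your proposal is correct and takes essentially the same route as the paper, which presents the corollary as a direct consequence of Proposition \ref{prop:almost-univ}: parts (2) and (3) sandwich $\asu$, and the congruence from part (1) shows that the length $Mm+3$ from part (2) is needed only for $n$ in the single class $n\equiv 3d^2\Mod{Mm}$ (forced length $r\equiv 3\Mod{Mm}$), so the dichotomy is decided exactly by three-square representability there. One bookkeeping remark: the compatible condition on $t$ is $t\equiv\frac{3-r_0}{m}\Mod{M}$, which equals $t\equiv 0\Mod{M}$ only when $m\ge 4$ (for $m=2$ it reads $t\equiv 1\Mod{4}$, i.e.\ $n\equiv 3d^2\Mod{4m}$), but this does not affect the validity of your pigeonhole argument.
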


According to Corollary \ref{cor:asu},  determining $\asu$ reduces to the case when $m$ is even and to expressing every sufficiently large integer $n\equiv 3d^2\Mod{Mm}$ as a sum of three squares in $\mca_{d,m}^2$. We address this problem by studying representations of integers by ternary lattice cosets.

\begin{thm}\label{thm:num-pspn}
   Let $m\in 2\n$ and $d\in\z$ with $(m,d)=1$. Let $L=\z e_1+\z e_2+ \z e_3\cong \langle m^2,m^2,m^2\rangle$ be a ternary $\z$-lattice and let $\nu=\frac{1}{m}(e_1+e_2+e_3)\in\q L$. Consider the lattice coset $X^d=L+d\nu$. 
   \begin{enumerate}[label={\rm (\arabic*)}]
       \item If $m\equiv 2\Mod{4}$ or $m\equiv0\Mod{4}$ with $3\nmid m$, then $\pspn(X^d)=\pgen(X^d)$.
       \item If $m\equiv0\Mod{12}$, then there are two proper spinor genera in $\pgen(X^d)$.
   \end{enumerate}
\end{thm}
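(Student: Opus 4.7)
The plan is to apply formula \eqref{eqn:numberofproperspinorgenera}, which reduces the count of proper spinor genera in $\pgen(X^d)$ to computing $[I_\q : \q^\times \prod_{p \in \Omega} H_p]$, where $H_p := \spnnorm{L_p + d\nu}$. I first reduce to $d = 1$: since $(m,d) = 1$, the integer $d$ is a unit in $\z_p$ for every prime $p \mid m$, and for $\sigma \in O^+(V_p)$ the condition $\sigma(d\nu) \equiv d\nu \pmod{L_p}$ is equivalent to $\sigma(\nu) \equiv \nu \pmod{d^{-1}L_p} = L_p$; hence $O^+(L_p + d\nu) = O^+(L_p + \nu)$ and $H_p$ is independent of $d$. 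For $p \nmid m$, $\nu$ lies in $L_p$, and since $L_p = m^2 \langle 1,1,1 \rangle$ with $m^2 \in (\z_p^\times)^2$, the standard computation for unit-scaled unimodular $\z_p$-lattices of rank $3$ gives $H_p = \z_p^\times(\q_p^\times)^2$; at the archimedean place $H_\infty = \mathbb{R}^+$.

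The heart of the proof is the local computation at the remaining primes. Writing $m = p^a m''$ with $p \nmid m''$ and rescaling by $m''^{-1} \in \z_p^\times$, I reduce to $L_p \cong p^{2a}\langle 1,1,1 \rangle$ with basis $\epsilon_1, \epsilon_2, \epsilon_3$ and coset vector $\tilde\nu = \frac{1}{p^a}(\epsilon_1+\epsilon_2+\epsilon_3)$. A direct analysis of the entries of a reflection $\tau_v$ with primitive $v = a\epsilon_1 + b\epsilon_2 + c\epsilon_3 \in L_p$ shows that $\tau_v \in O(L_p)$ forces $v_p(a^2+b^2+c^2) = 0$ (for $p$ odd) or $v_p(a^2+b^2+c^2) \in \{0,1\}$ (for $p=2$), and that $\tau_v$ additionally fixes the coset iff a divisibility condition $v_p(a+b+c) \ge 1 + v_p(a^2+b^2+c^2)$ is met. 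Substituting $c = -(a+b)$ modulo $p$ yields the identity $a^2+b^2+c^2 \equiv 2(a^2 + ab + b^2) \pmod p$. For odd $p \ge 5$ with $p \mid m$, the form $x^2+xy+y^2$ over $\f_p$ takes every value (it is either a split product or the norm form of $\f_{p^2}/\f_p$), so reflection norms realize both classes of $\z_p^\times/(\z_p^\times)^2$ and $H_p = \z_p^\times(\q_p^\times)^2$. For $p = 3$ with $3 \mid m$, however, $x^2+xy+y^2 \equiv (x-y)^2 \pmod 3$ takes only the value set $\{0,1\}$, forcing every coset-stabilizing reflection norm to be $\equiv 2 \pmod{(\q_3^\times)^2}$; products of even length then collapse to $(\q_3^\times)^2$, so $H_3 = (\q_3^\times)^2$ is a proper index-$4$ subgroup of $\q_3^\times$. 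At $p = 2$ the analogous $2$-adic analysis shows $H_2 = \q_2^\times$ when $2\,\|\,m$ (the case $v_2(a^2+b^2+c^2)=0$ survives, and cross-products of reflection norms from the two cases exhaust $\q_2^\times/(\q_2^\times)^2$), while $H_2 = \z_2^\times(\q_2^\times)^2$ when $4\mid m$ (only the case $v_2=1$ survives, reflection norms lie in $2\z_2^\times$, and even-length products give exactly the units).

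With the local $H_p$ determined, the index $[I_\q : \q^\times \prod_p H_p]$ equals the $\f_2$-codimension of the image of $\phi : \q^\times \to M := \bigoplus_p \q_p^\times/H_p$. For case (1): when $m \equiv 2 \pmod 4$ the factor at $p = 2$ vanishes, and the image of $\phi$ surjects (if $3 \nmid m$, valuation-at-$p$ generators suffice; if $3 \mid m$, then $\phi(2) = e_{2,3}$ hits the extra unit-at-$3$ direction); when $4 \mid m$ with $3 \nmid m$, the basis $\{e_{-1}, e_{2,2}, e_{3,3}\} \cup \{e_{p,p}\}_{p \ne 2,3}$ is hit respectively by $\phi(-1), \phi(2), \phi(3), \phi(p)$. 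In both subcases the cokernel is trivial and $\pspn(X^d) = \pgen(X^d)$, proving (1). For case (2), $m \equiv 0 \pmod{12}$ makes $\q_2^\times/H_2 = \f_2\{e_{2,2}\}$ and $\q_3^\times/H_3 = \f_2\{e_{3,3}\} \oplus \f_2\{e_{2,3}\}$ both nontrivial; direct computation gives $\phi(-1) = e_{-1} + e_{2,3}$, $\phi(2) = e_{2,2} + e_{2,3}$, $\phi(3) = e_{3,3}$, and $\phi(p) = e_{p,p} + \epsilon_p e_{2,3}$ for odd primes $p \ne 3$ (with $\epsilon_p = 1$ iff $p \equiv 2 \pmod 3$). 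Since no positive rational $a$ with even $p$-adic valuation at every prime $p \ne 3$ can satisfy $a \equiv 2 \pmod 3$, the vector $e_{2,3}$ is not in the image of $\phi$, while every other basis vector is. Therefore $M/\phi(\q^\times) \cong \f_2 e_{2,3}$ has dimension $1$, giving exactly two proper spinor genera.

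The main obstacle is the explicit local work at $p \in \{2,3\}$: identifying the coset-stabilizing reflections through the parity/divisibility conditions, computing their spinor norms modulo $(\q_p^\times)^2$, and verifying that no exotic ($p$-adically non-integral or non-reflective) coset-stabilizing isometry enlarges $H_p$. Once these local determinations are in place, the global count reduces to the straightforward $\f_2$-linear-algebra analysis above.
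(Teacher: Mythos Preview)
Your overall architecture matches the paper's exactly: reduce to $d=1$ (the paper cites \cite[Lemma~3.1(1)]{KaneKim}, you give the one-line argument directly), compute $\spnnorm{X_p^d}$ at every place, and feed the answers into the id\`ele index formula~\eqref{eqn:numberofproperspinorgenera}. The difference is purely in how the local groups are obtained. The paper outsources all of this: for $m\equiv 2\pmod 4$, $p=2$, it cites \cite[Proposition~2.3]{Kim} to get $O^+(X_2)=O^+(L_2)$ and then \cite{BKK} for the spinor norm; for all other $(m,p)$ it identifies $X_p^d$ with a coset already treated in \cite[Proposition~3.4]{HK} and imports both the local spinor norm groups and the final index from there. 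Your approach is self-contained, deriving each $H_p$ by classifying coset-preserving reflections and then doing the $\f_2$-linear algebra explicitly. Your values of $H_p$ and your index computation agree with the paper's (note the paper's ``$\z_2(\q_2^\times)^2$'' is just $\q_2^\times$), and the global argument is fine.

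The one genuine gap is precisely the point you flag as ``the main obstacle'': the \emph{upper} bounds $H_3\subseteq(\q_3^\times)^2$ when $3\mid m$ and $H_2\subseteq\z_2^\times(\q_2^\times)^2$ when $4\mid m$. Your reflection analysis computes only the subgroup generated by spinor norms of reflections $\tau_v$ that \emph{individually} preserve the coset, but $O^+(L_p+\nu)$ may well contain products $\tau_{v_1}\cdots\tau_{v_{2k}}$ that fix the coset even though no single $\tau_{v_i}$ does. Nothing in your sketch rules this out, and these upper bounds are exactly what separate case~(2) from case~(1). The paper sidesteps the issue by quoting \cite{HK}. One clean way to close the gap at $p=3$: since $O^+(L_3+\nu)\subseteq O^+(L_3)$ and the $\q_3$-spinor norm on $O^+(L_3)$ factors through reduction to $SO_3(\f_3)$, it suffices to check that the stabilizer of $(1,1,1)$ in $SO_3(\f_3)$ has trivial spinor norm; a short calculation shows this stabilizer is the cyclic group $\langle\rho\rangle$ of coordinate permutations, and $\theta(\rho)=Q(e_1-e_2)Q(e_2-e_3)=2\cdot 2\equiv 1$. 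A similar (but more delicate) $2$-adic argument is needed when $4\mid m$. Also, your coset divisibility condition should read $v_p(a+b+c)\ge v_p(m)+v_p(a^2+b^2+c^2)-v_p(2)$ rather than $\ge 1+v_p(a^2+b^2+c^2)$; this does not affect your spinor-norm conclusions but is worth stating correctly.
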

\begin{proof}
    Recall from \eqref{eqn:numberofproperspinorgenera} that the number of proper spinor genera in $\pgen(X^d)$ is given by
	\begin{equation}\label{eqn:num-pspns}
	\left[I_\q : \q^\times \prod\limits_{p\in \Omega} \spnnorm{X_p^d}\right].
	\end{equation}
    Hence, it remains to compute $\spnnorm{X_p^d}$ for all primes $p$. In fact, these can be obtained from the previously calculated local spinor norm groups for lattice cosets considered in \cite{BKK} and \cite{HK}. Therefore, instead of providing an explicit computation of $\spnnorm{X_p^d}$, we explain how they can be deduced from those results.
    
    First note that since $(m,d)=1$, \cite[Lemma 3.1 (1)]{KaneKim} implies that $O^+(L_p+d\nu)=O^+(L_p+\nu)$ for all primes $p$. Therefore, for each prime $p$, the spinor norm group $\spnnorm{X_p^d}$ is independent of $d$ as long as $(m,d)=1$.

    Assume $m\equiv 2\Mod{4}$ and $p=2$. In this case, we have $L_2=\z_2 e_1+\z_2 e_2+\z_2 e_3\cong\langle 4,4,4\rangle$ and 
    \[
        X_2^d=L_2+\nu=L_2+\frac{1}{m}(e_1+e_2+e_3)=L_2+\frac{1}{2}(e_1+e_2+e_3).
    \]
    By \cite[Proposition 2.3]{Kim}, it follows that $O^+(X_2^d)=O^+(L_2)$, and hence $\spnnorm{X_2^d}=\z_2(\q_2^\times)^2$ as described in the proof of \cite[Proposition 4.2]{BKK}.

    Otherwise, either $p$ is an odd prime or $p=2$ with $m\equiv 0\Mod{4}$. In this case, the spinor norm group $\spnnorm{X_p^d}$ has already been computed in \cite[Proposition 3.4]{HK} under different setting. To explain this, consider the lattice coset $K+\mu$ defined by
    \[
        K=K_{(m')}=\z f_1+\z f_2+\z f_3\cong \langle(m'-2)^2,(m'-2)^2,(m'-2)^2\rangle \quad \text{and} \quad \mu:=\frac{m'-4}{2(m'-2)}(f_1+f_2+f_3)
    \]
    as in \cite[(2.1) and (2.2)]{HK}. By setting $m'=m+2$, we observe that $K+\mu=L+\left(\frac{m-2}{2}\right)\nu$, and $(m,\frac{m-2}{2})=1$ since $m\equiv 0\Mod{4}$. Therefore, for all $d$ with $(m,d)=1$, we have $\spnnorm{X_p^d}=\spnnorm{K_p+\mu}$. Note also that
    \[
        \begin{cases}
            m'\equiv 2\Mod{4} \iff m \equiv 0\Mod{4},\\
            m'\equiv 2\Mod{3} \iff m \equiv 0\Mod{3},
        \end{cases}
    \]
    so the conditions for $m'$ in \cite[Proposition 3.4]{HK} correspond exactly to the condition on $m$ here. Hence,
    \[
        \theta(O^+(X^d_p))=\begin{cases}
            (\q_3^\times)^2 & \text{if } p=3 \text{ and } p\mid m,\\
            \z_p^\times(\q_p^\times)^2 & \text{otherwise}.
        \end{cases}
    \]        
    Furthermore, when $m\equiv 0\Mod{4}$, the index in \eqref{eqn:num-pspns}, which counts the number of proper spinor genera in $\pgen(X^d)$, can also be computed as in the proof of \cite[Proposition 3.4]{HK}. This completes the proof for the case when $m\equiv 0\Mod{4}$.
    
    Now we consider the case when $m\equiv 2\Mod{4}$. Since the computation of $\theta(O^+(X^d_p))$ for odd primes $p$ in \cite[Proposition 3.4]{HK} applies unchanged in this setting, the above arguments yield
	\[
		\spnnorm{X_p^d} =
		\begin{cases} 
			\q_2\setminus \{0\} & \text{if } p=2,\\
			(\q_3^\times)^2& \text{if } p=3 \text{ and } 3\mid m,\\
			\z_p^\times (\q_p^\times)^2 & \text{otherwise}.
		\end{cases}	
	\]
    The index \eqref{eqn:num-pspns} equals $1$ as computed in the proof of \cite[Theorem 4.3]{BKK}, completing the proof.
\end{proof}

Using the decomposition of theta series into three parts, we obtain the following theorem.

\begin{thm}\label{thm:asu-ternary}
    Let $m\in\n$ be an even integer such that $12\nmid m$, and let $d\in\z$ satisfy $1\le d\le m$ with $(m,d)=1$. Then every sufficiently large integer $n$ with $n\equiv 3d^2\Mod{m}$ can be represented in the form 
    \begin{equation}\label{eqn:three-squares-m,d}
        n=\sum_{i=1}^3 (mx_i+d)^2
    \end{equation}
    for some integers  $x_i\in\z$.
\end{thm}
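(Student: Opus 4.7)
The plan is to study the theta series of the ternary lattice coset $X^d = L + d\nu$ introduced in Theorem \ref{thm:num-pspn}. By the discussion in Section \ref{sec:prelim}, writing $n = \sum_{i=1}^{3}(mx_i+d)^2$ is equivalent to finding $y \in X^d$ with $Q(y)=n$, so the goal becomes to show $r_{X^d}(n) > 0$ for every sufficiently large $n\equiv 3d^2\Mod{m}$.

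First I would invoke the standard orthogonal decomposition of $\Theta_{X^d}$ in the space of weight $\tfrac{3}{2}$ modular forms:
\[
\Theta_{X^d} \;=\; \Theta_{\pgen(X^d)} \;+\; \bigl(\Theta_{\pspn(X^d)} - \Theta_{\pgen(X^d)}\bigr) \;+\; \bigl(\Theta_{X^d} - \Theta_{\pspn(X^d)}\bigr).
\]
The first summand lies in the Eisenstein subspace, and the third is a cusp form. By Theorem \ref{thm:num-pspn}(1), the hypotheses that $m$ is even and $12 \nmid m$ force $\pspn(X^d) = \pgen(X^d)$, so the middle summand vanishes identically, leaving
\[
r_{X^d}(n) \;=\; r_{\pgen(X^d)}(n) + c(n),
\]
where $c(n)$ is the $n$-th Fourier coefficient of a weight $\tfrac{3}{2}$ cusp form.

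Next, I would invoke two asymptotic estimates. On one hand, Iwaniec-type bounds for Fourier coefficients of half-integral weight cusp forms yield $c(n) = O_\epsilon\bigl(n^{1/2 - \delta + \epsilon}\bigr)$ for some absolute $\delta>0$. On the other hand, the Minkowski--Siegel mass formula for lattice cosets expresses $r_{\pgen(X^d)}(n)$ as a product of local densities multiplied by an explicit archimedean factor of order $\gg_\epsilon n^{1/2-\epsilon}$, provided that $n$ is locally represented by $X^d$ at every prime. At primes $p\nmid m$, the coset $X_p^d$ coincides (up to rescaling by $m$) with the standard unimodular lattice $\langle 1,1,1\rangle_p$, so local representability reduces to the classical three-squares condition at $p$, which for any positive $n$ holds automatically. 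It remains to verify that for each $p\mid m$ the single congruence $n \equiv 3d^2\Mod{m}$ already forces $n\in Q(X_p^d)$.

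The main obstacle lies in this last local step at the bad primes, most delicately at $p=2$ and (when relevant) at $p=3$. Here I would trace through the explicit descriptions of $X_p^d$ underlying the proof of Theorem \ref{thm:num-pspn}, following the analyses in \cite{BKK} and \cite{HK}, to confirm that every $n\equiv 3d^2\Mod{m}$ with bounded $p$-adic valuation lies in $Q(X_p^d)$ and that the corresponding local densities admit a uniform positive lower bound independent of $n$. Combining this quantitative local input with the Eisenstein-series lower bound and the cusp-form upper bound above shows $r_{X^d}(n)>0$ for all sufficiently large $n$ in the prescribed residue class, completing the proof.
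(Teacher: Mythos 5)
You follow essentially the same route as the paper---pass to the coset $X^d=L+d\nu$, use Theorem \ref{thm:num-pspn}(1) to make the spinor-genus discrepancy vanish, and compare an Eisenstein lower bound with a cuspidal upper bound---but there is a genuine gap at the cuspidal estimate. In weight $\tfrac{3}{2}$ it is \emph{false} that every cusp form has coefficients $O_\epsilon(n^{1/2-\delta+\epsilon})$: the unary theta functions $\sum_{h}\psi(h)\,h\,q^{th^2}$ are weight-$\tfrac{3}{2}$ cusp forms whose coefficients are of size $\asymp n^{1/2}$ along the square classes $n=th^2$, and such $n$ do occur among the integers you must cover (you cannot retreat to squarefree $n$, since the theorem concerns every large $n$ in a fixed residue class). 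As your Eisenstein main term is only $\gg_{m,\epsilon} n^{1/2-\epsilon}$, the comparison yields nothing unless you first show that the cuspidal piece $\Theta_{X^d}-\Theta_{\pspn(X^d)}$ is orthogonal to all unary theta functions. This is exactly the ingredient the paper imports from \cite[Theorem 6.2]{KaneKim}; only after that does Duke's theorem \cite{Duke} apply, giving $\ll_{m,\epsilon}n^{3/7+\epsilon}$ for the cuspidal coefficients. Without this step the ``Iwaniec-type'' bound you invoke is simply not available in weight $\tfrac{3}{2}$, and the proof does not close.

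A secondary issue is the local analysis at $p\mid m$, which you rightly flag as the delicate point but leave as a plan---and, as literally posed, it cannot be completed: every value of $Q$ on $X^d$ is a sum of three odd squares, hence $\equiv 3\Mod{8}$, while the congruence $n\equiv 3d^2\Mod{m}$ does not determine $n$ modulo $8$ when $m\equiv 2\Mod{4}$ or $m\equiv 4\Mod{8}$, so the mod-$m$ hypothesis alone does not force $2$-adic representability. The hypothesis must be read as local representability by $X^d$ (equivalently, the congruence $n\equiv 3d^2\Mod{Mm}$ that actually arises in the application through Proposition \ref{prop:almost-univ}(1) and Corollary \ref{cor:asu}); with that reading, representability at the odd primes dividing $m$ and the uniform positivity of the local densities (note $n$ is odd and $2$ is the only anisotropic prime) give the genus bound $r_{\pgen(X^d)}(n)\gg_{m,\epsilon}n^{1/2-\epsilon}$ as in the paper. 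So to repair your argument you need (i) the orthogonality-to-unary-thetas input before applying any Duke-type bound, and (ii) the local step carried out from the correct congruence class, not from the class mod $m$ alone.
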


\begin{proof}
Let $L=\z e_1+\z e_2+ \z e_3\cong \langle m^2,m^2,m^2\rangle$ and $\nu=\frac{1}{m}(e_1+e_2+e_3)\in\q L$. Consider $X^d=L+d\nu$. Recall that $n$ can be written as in \eqref{eqn:three-squares-m,d} if and only if $n$ is represented by $X^d$. Since $\pgen(X^d)=\pspn(X^d)$ by Theorem \ref{thm:num-pspn}, and recalling definitions \eqref{defn-thetaofpropergenus} and \eqref{defn-thetaofproperspinorgenus}, we have the decomposition
\[
\Theta_{X^d}= \Theta_{\pgen(X^d)}+\Theta_{X^d}-\Theta_{\pgen(X^d)}= \Theta_{\pgen(X^d)}+\Theta_{X^d}-\Theta_{\pspn(X^d)}.
\]
By \cite[Theorem 6.2]{KaneKim}, the difference $\Theta_{X^d}-\Theta_{\pspn(X^d)}$ lies in the subspace of cusp forms orthogonal to unary theta functions. Hence, by Duke's bound \cite{Duke}, the absolute value of the Fourier coefficients of $\Theta_{X^d}-\Theta_{\pspn(X^d)}$ satisfy the estimate $\ll_{m,\varepsilon}n^{3/7+\varepsilon}$. 
On the other hand, the Fourier coefficients $r_{\pgen(X^{d})}(n)$ of $\Theta_{\pgen(X^d)}$ satisfy the lower bound $r_{\pgen(X^d)}(n)\gg_{m,\varepsilon} n^{\frac{1}{2}-\varepsilon}$, since $n$ is locally  represented by $X^d$ and the order $\ord_p(n)=0$ for every prime divisor $p\mid m$. Comparing these asymptotic behaviors, we conclude the proof of theorem.
\end{proof}

\begin{thm}[{\cite[Theorem 5.2]{HK}}]\label{thm:exceptional}
   $\mathrm{(1)}$ If $\ell\equiv1 \Mod{12}$ is an odd prime, then 
   \[
    x^2+y^2+z^2=3\ell^2
   \]
   has no solution in $x,y,z\in\z$ with $x\equiv y\equiv z\equiv 5\Mod{12}$.

   $\mathrm{(2)}$ If $\ell\equiv 7 \Mod{12}$ is an odd prime, then 
   \[
    x^2+y^2+z^2=3\ell^2
   \]
   has no solution in $x,y,z\in\z$ with $x\equiv y\equiv z\equiv 1\Mod{12}$.
\end{thm}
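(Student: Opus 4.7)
The plan is to treat case (1) in detail; case (2) follows symmetrically via $s \to -s$. I would assume for contradiction that $x,y,z\in\z$ satisfy $x^2+y^2+z^2=3\ell^2$ with $x\equiv y\equiv z\equiv 5\pmod{12}$ for a prime $\ell\equiv 1\pmod{12}$. Since $x\equiv y\equiv z\pmod{12}$ and $x+y+z\equiv 15\equiv 0\pmod 3$, the integers
\[
s:=\tfrac{x+y+z}{3},\qquad p:=\tfrac{x-y}{12},\qquad q:=\tfrac{y-z}{12}
\]
are well-defined. Applying $3(x^2+y^2+z^2)=(x+y+z)^2+(x-y)^2+(y-z)^2+(z-x)^2$ together with $z-x=-12(p+q)$ yields the key relation
\begin{equation}\label{eqn:prop-key}
\ell^2-s^2=32\,(p^2+pq+q^2),
\end{equation}
exhibiting the right-hand side as $32$ times the norm form of $\z[\omega]$, where $\omega=e^{2\pi i/3}$.

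Next I would extract congruences on $s,p,q$. From $z=s-4p-8q\equiv 5\pmod{12}$ we read off $s\equiv 1\pmod 4$. Reducing \eqref{eqn:prop-key} modulo $3$, using $\ell^2\equiv 1\pmod 3$ and the fact $p^2+pq+q^2\in\{0,1\}\pmod 3$, forces $p\equiv q\pmod 3$ and $s\equiv 2\pmod 3$; hence $s\equiv 5\pmod{12}$, so that $\ell-s\equiv 8\pmod{12}$ and $\ell+s\equiv 6\pmod{12}$. Because $2$ is inert in $\z[\omega]$, the $2$-adic valuation of any norm from $\z[\omega]$ is automatically even; iterating this constraint through the $2$-adic expansion of $\ell-s=12a+8$ would force $\ell-s=16(3k+2)$ with $3k+2$ odd. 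Writing $\ell+s=6(2b+1)$, one arrives at
\[
n:=\frac{(\ell-s)(\ell+s)}{32}=3\,(3k+2)(2b+1),
\]
and \eqref{eqn:prop-key} demands that $n$ be represented by $p^2+pq+q^2$; equivalently, every rational prime $\equiv 2\pmod 3$ must divide $n$ to an even power.

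The hard part is ruling out such $n$. The factor $3k+2$ is $\equiv 2\pmod 3$, so it necessarily contains some prime $\equiv 2\pmod 3$ to odd total multiplicity, and for $n$ to be a norm this must cancel against the corresponding contribution from $2b+1$. The hypothesis $\ell\equiv 1\pmod{12}$ is precisely what obstructs such a cancellation. I would attempt a case analysis on $b\pmod 3$ combined with Hilbert reciprocity for $\q(\sqrt{-3})/\q$, converting the local parity conditions at $2$, $3$, and $\ell$ (which jointly encode $\ell\equiv 1\pmod{12}$) into the desired global obstruction. Equivalently, by Theorem \ref{thm:num-pspn}(2) the genus of $L+5\nu$ splits into two proper spinor genera, and the claim amounts to showing that $3\ell^2$ is a \emph{spinor exceptional integer} relative to the coset $L+5\nu$ whenever $\ell\equiv 1\pmod{12}$; identifying this dichotomy through the spinor norm computation at $p=3$ is, I expect, the crux of the argument.
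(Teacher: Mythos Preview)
The paper does not prove this statement; it is quoted from \cite[Theorem~5.2]{HK}, where the argument proceeds via the theory of primitive spinor exceptions for ternary lattice cosets. Measured against that, your proposal is incomplete. Two local details are already off: the claim $s\equiv 2\Mod{3}$ does not follow from reducing your key relation modulo~$3$ alone (that step only yields $s\not\equiv 0\Mod{3}$ together with $p\equiv q\Mod{3}$; to pin down $s\equiv 2\Mod{3}$ you must also use the mod-$3$ content of, say, $z\equiv 5\Mod{12}$, namely $s-p+q\equiv 2\Mod{3}$), and the $2$-adic assertion $\ell-s=16(3k+2)$ with $3k+2$ odd is unjustified, since from $\ord_2(\ell+s)=1$ and the evenness of $\ord_2(p^2+pq+q^2)$ one only obtains $\ord_2(\ell-s)\in\{4,6,8,\dots\}$, not $\ord_2(\ell-s)=4$.

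The decisive gap, however, is that the argument stops at the hard step. After the (essentially correct) reduction to a norm equation over $\z[\omega]$ you assert that ``$\ell\equiv 1\Mod{12}$ is precisely what obstructs such a cancellation'' and then defer to an unexecuted case analysis with Hilbert reciprocity, or alternatively to the spinor-exception framework. Your closing identification---that the claim amounts to $3\ell^2$ being spinor exceptional for the coset $L+5\nu$---is correct and is exactly how \cite{HK} proceeds, but naming the mechanism is not a proof: one must still determine the spinor exceptional square classes (using the local spinor norm groups assembled for Theorem~\ref{thm:num-pspn}) and verify that $3\ell^2$ lands in the non-represented proper spinor genus precisely when $\ell$ lies in the stated residue class. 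As written, the proposal is an outline whose crux is left undone.
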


With all the necessary results established, we are ready to complete the proof of Theorem \ref{thm:asu-main}.

\begin{proof}[Proof of Theorem \ref{thm:asu-main}]
    By Corollary \ref{cor:asu} and Theorem \ref{thm:asu-ternary}, we have 
    \[
        \asu=\begin{cases}
            m+4 & \text{if }m\equiv 1\Mod{2},\\
            4m+2 & \text{if }m\equiv 2\Mod{4},\\
            2m+2 & \text{if }m\equiv 0\Mod{4} \text{ and } m\not\equiv0\Mod{3}.
        \end{cases}
    \]
    Thus, it remains to show that $\asu=2m+3$ if $m\equiv 0\Mod{12}$. 
    
    Write $m=12m_0$ with $m_0\in\n$, and let $d_0\in\{1,5\}$ be such that $d\equiv \pm d_0 \Mod{12}$. Consider an odd prime $\ell$ satisfying
    \begin{equation}\label{eqn:ell-cond}
        \ell\equiv \begin{cases}
            7 \Mod{12} & \text{if } d\equiv 1 \text{ or } 11 \Mod{12}\\
            1 \Mod{12} & \text{if } d\equiv 5 \text{ or } 7 \Mod{12}.
        \end{cases}
    \end{equation}
   Assume that $n=mt+3d^2=3\ell^2$ and that $n$ is a sum of three squares in $\mca_{d,m}^2$. Then
    \begin{equation}
        3\ell^2=n=\sum_{i=1}^3 (mx_i+d)^2=(12m_0x_1+d)^2+(12m_0x_2+d)^2+(12m_0x_3+d)^2.
    \end{equation}
    for some $x_i\in\z$. By changing the sign of all $x_i$ if necessary, the equation $x^2+y^2+z^2=3\ell^2$ has a solution with $x\equiv y \equiv z\equiv d_0 \Mod{12}$, which contradicts Theorem \ref{thm:exceptional}. 
    
    Hence, by Corollary \ref{cor:asu}, it suffices to show that there are infinitely many odd primes $\ell$ satisfying \eqref{eqn:ell-cond} and $3\ell^2\equiv 3d^2\Mod{Mm}$, equivalently, $\ell^2\equiv d^2\Mod{4Mm_0}$, where $M=4$ if $m\equiv2\Mod{4}$ and $M=2$ if $m\equiv0\Mod{4}$. Write $Mm_0=2^a3^bm_1$ with $(m_1,6)=1$. Then by the Chinese Remainder Theorem and Hensel's Lemma, there exists $\ell$ such that $\ell\equiv d\Mod{m_1}$ and
    \[
    \begin{cases}
        \ell\equiv 1\Mod{3},\ \ell\equiv \pm d\Mod{3^b},\ \ell \equiv -1\Mod{4},\ \ell^2\equiv d^2\Mod{2^{a+2}} & \text{if }d_0=1,\\
        \ell\equiv 1\Mod{3},\ \ell\equiv \pm d\Mod{3^b},\ \ell \equiv 1\Mod{4},\ \ell^2\equiv d^2\Mod{2^{a+2}} & \text{if }d_0=5.
    \end{cases}
    \]
    Such $\ell$ satisfies $\ell^2\equiv d^2 \Mod{4Mm_0}$. By Dirichlet's theorem on arithmetic progression. This concludes the proof that $\asu[\mca_{d,m}]=2m+3$ when $m\equiv0\Mod{12}$.
\end{proof}

\section{Square universality of $\mca_{d,m}$}\label{sec:su}

In this section, we discuss the square universality of $\mca_{d,m}$ and prove Theorem \ref{thm:su-main}. We first investigate the necessary conditions.

\begin{lem}\label{lem:su-necessary}
Let $m\in\n$ and $d\in\z$ with $(m,d)=1$. If $\mca_{d,m}$ is square universal, then we have
\[
d\equiv\pm1\Mod{m} \quad \text{and} \quad \su[\mca_{d,m}]  \ge \max(m^2-2m,\asu)
\]
for these integers $d$.
\end{lem}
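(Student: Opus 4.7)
The plan is to handle the two assertions of the lemma separately, each by identifying a single small test integer that obstructs efficient representation.

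For the first assertion $d \equiv \pm 1 \Mod{m}$, I would test the integer $n = 1$. Since $(m,d)=1$, for $m \ge 2$ no element of $\mca_{d,m}$ vanishes, so every element of $\mca_{d,m}^2$ is a strictly positive integer. Hence writing $1$ as a sum of squares from $\mca_{d,m}^2$ forces exactly one summand, equal to $1$, and therefore $(mx+d)^2 = 1$ for some $x \in \z$. This yields $d \equiv \pm 1 \Mod{m}$. The case $m = 1$ is vacuous, since the congruence holds automatically.

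For the inequality $\su[\mca_{d,m}] \ge \max(m^2 - 2m, \asu)$, the bound $\su \ge \asu$ is immediate from the definitions, since $s$-square universality implies $s$-almost square universality. For the bound $\su \ge m^2 - 2m$, I would invoke the first assertion to reduce to $d \equiv \pm 1 \Mod{m}$, in which case
\[
\mca_{d,m}^2 = \{1,\ (m-1)^2,\ (m+1)^2,\ (2m-1)^2,\ (2m+1)^2,\ \ldots\}.
\]
I would then test the integer $n = m^2 - 2m$. Since the smallest nontrivial element of $\mca_{d,m}^2$ equals $(m-1)^2 = m^2 - 2m + 1 > n$, the only element of $\mca_{d,m}^2$ not exceeding $n$ is $1$ itself. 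Any representation of $n$ as a sum of squares from $\mca_{d,m}^2$ must therefore consist of $n$ copies of $1$, forcing $\su[\mca_{d,m}] \ge n = m^2 - 2m$.

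I do not anticipate any substantial obstacle; the entire argument turns on the extreme sparsity of $\mca_{d,m}^2$ near $0$. The only technical point is the verification that $(m,d)=1$ together with $m \ge 2$ ensures every nonzero element of $\mca_{d,m}$ has absolute value at least $1$, so that all summands in a sum of squares from $\mca_{d,m}^2$ are strictly positive integers. In the degenerate cases $m \in \{1,2\}$, where $m^2 - 2m \le 0$, the bound $\su \ge m^2 - 2m$ is trivial and one relies entirely on $\su \ge \asu$.
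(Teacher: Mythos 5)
Your proof is correct and follows essentially the same route as the paper: represent $n=1$ to force $\pm 1\in\mca_{d,m}$, represent $n=(m-1)^2-1=m^2-2m$ to force at least $m^2-2m$ summands (since $(m-1)^2$ is the second smallest square), and note $\su\ge\asu$ by definition. Your explicit handling of the degenerate cases $m\in\{1,2\}$ is a minor extra precaution, not a different argument.
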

\begin{proof}
    To represent $1$ as a sum of squares in $\mca_{d,m}^2$, we must have either $1\in\mca_{d,m}$ or $-1\in\mca_{d,m}$. Hence, $d\equiv \pm1\Mod{m}$. Moreover, to represent $(m-1)^2-1=m^2-2m$, we need at least $m^2-2m$ squares in $\mca_{d,m}^2$, since $(m-1)^2$ is the second smallest square in $\mca_{d,m}^2$. Therefore, $\su[\mca_{d,m}] \ge m^2-2m$. Since by definition $\su\ge\asu$, the claim follows.
\end{proof}

\begin{lem}\label{lem:rep-small}
	Let $m\ge8$ be an integer, and let $d$ be an integer with $d\equiv\pm1\Mod{m}$. Then every positive integer $n\le 2m(m-1)^2$ can be written as a sum of at most $m^2-2m$ squares in $\mca_{d,m}^2$.
\end{lem}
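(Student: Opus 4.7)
The plan is to give an explicit construction using only the three smallest squares $1$, $(m-1)^2$, and $(m+1)^2$. Since $\mca_{-1,m}=-\mca_{1,m}$, we have $\mca_{d,m}^2=\mca_{1,m}^2=\{(km\pm1)^2:k\ge 0\}$ whenever $d\equiv\pm1\Mod{m}$, so I may reduce to the case $d=1$. Throughout, set $S:=m^2-2m=(m-1)^2-1$, so that the identities $(m-1)^2=S+1$ and $(m+1)^2=S+4m+1$ become the backbone of the construction.

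The first step is to observe that, for each $a\in\{0,1,\ldots,2m\}$, combining $a$ copies of $(m-1)^2$ with $b$ copies of $1$ (where $0\le b\le S-a$) represents every integer in
\[
I_a:=[a(S+1),(a+1)S],
\]
using a total of at most $S=m^2-2m$ squares. Since $2m\le S$ holds for $m\ge 4$, the interval $I_{2m}$ is nonempty and already contains the upper endpoint $2m(m-1)^2$. A short computation reveals that the complement of $\bigcup_{a=0}^{2m}I_a$ within $[0,2m(m-1)^2]$ consists precisely of the ``gap'' values
\[
n^*=(k+1)S+j,\qquad 1\le j\le k,\ \ 1\le k\le 2m-1.
\]

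To cover each such gap value, I would exhibit the explicit decomposition
\[
n^*=(k-1)(m-1)^2+(m+1)^2+c\cdot 1,\qquad c:=S+j-k-4m,
\]
whose correctness follows immediately from $(m+1)^2-(m-1)^2=4m$. This uses $(k-1)+1+c=S+j-4m$ squares, which is automatically at most $S$ since $j\le k\le 2m-1<4m$.

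The main obstacle is verifying that $c\ge 0$, i.e., that there are in fact enough copies of $1$ available to absorb the residual. The quantity $c=S+j-k-4m$ is minimized at $(k,j)=(2m-1,1)$, so the condition amounts to $S\ge 6m-2$, equivalently $m^2-8m+2\ge 0$. This inequality holds precisely for $m\ge 8$ (with $c=2$ exactly when $m=8$), which is where the hypothesis of the lemma enters. The fact that the construction just barely succeeds at $m=8$ indicates that the three-square construction is tight for this method, and any attempt to push it below would require incorporating the larger squares $(2m\pm1)^2$, $(3m\pm1)^2$, etc.
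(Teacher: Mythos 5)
Your proposal is correct and follows essentially the same route as the paper's proof: reduce to $d=1$, cover long intervals using copies of $(m-1)^2$ and $1$, identify the gap values near multiples of $(m-1)^2$, and fill each gap by trading one $(m-1)^2$ for $(m+1)^2$, with the feasibility condition reducing to $m^2-8m+2\ge 0$, i.e.\ $m\ge 8$, exactly as in the paper. The only difference is cosmetic bookkeeping (your interval/gap parametrization versus the paper's explicit choices of $x_i\in\{0,\pm1\}$ and the bound $2m-1\le m^2-6m+1$).
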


\begin{proof}
Without loss of generality, we may assume $d=1$. Recall that the three smallest squares in $\mca_{1,m}^2$ are $1$, $(m-1)^2$, and $(m+1)^2$. Let $k,r$ be integers such that $0\le k \le 2m-1$ and $k\le r\le m^2-2m$. Setting $x_1=\cdots=x_k=-1$ and $x_{k+1}=\cdots=x_r=0$, we have
\[
    \sum_{i=1}^r (mx_i+1)^2 = k\cdot (m-1)^2+ (r-k).
\]
Hence, every integer $n$ such that
\[
k(m-1)^2\le n \le k(m-1)^2+(m^2-2m-k)=(k+1)(m-1)^2-(k+1)
\]
can be represented as a sum of squares in $\mca_{1,m}^2$. Therefore, all integers $n\le2m(m-1)^2$ can be represented in this manner, except those $n$ satisfying
\begin{equation}\label{eqn:n-left}
(k+1)(m-1)^2-k\le n\le (k+1)(m-1)^2-1 \quad \text{with }1\le k \le 2m-1.
\end{equation}

Now, taking $x_1=1$, $x_2=\cdots=x_k=-1$, and $x_{k+1}=\cdots=x_r=0$, we obtain
\[
    \sum_{i=1}^r (mx_i+1)^2 = (m+1)^2+(k-1)\cdot(m-1)^2+ (r-k)=(k+1)\cdot (m-1)^2-(m^2-6m+1)+(r-k).
\]
Since $k\le r \le m^2-2m$, every integer $n$ in the range
\[
(k+1)\cdot (m-1)^2-(m^2-6m+1)\le n \le (k+1)\cdot (m-1)^2+(4m-k-1)
\]
is representable as a sum of squares in $\mca_{1,m}^2$. Since $k\le 2m-1\le m^2-6m+1$ and $4m-k-1\ge 4m-(2m-1)-1\ge -1$, this range covers all integers $n$ in \eqref{eqn:n-left}. This completes the proof.
\end{proof}

Although Proposition \ref{prop:almost-univ} (2), (3) provide effective bounds for almost universality, it remains somewhat too large to guarantee that every small integer is representable as a sum of squares in $\mca_{d,m}^2$. In the following lemma, we derive a sharper bound, from which Lemma \ref{lem:rep-small} suffices to complete the verification of $\su$ for $d\equiv \pm 1\Mod{m}$.

\begin{lem}\label{lem:rep-1,m-effective}
	Let $m\ge6$ be an integer, $e\in\{1,2\}$ be such that $e\equiv m\Mod{2}$, and define $M=1$ if $m\equiv1\Mod{2}$, $M=4$ if $m\equiv 2\Mod{4}$, and $M=2$ if $m\equiv0\Mod{4}$. Then every integer $n\ge 2m(m-1)^2$ can be written as a sum of at most $Mm+5-e$ squares in $\mca_{d,m}^2$.
\end{lem}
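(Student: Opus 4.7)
The plan is to mimic the argument of Proposition~\ref{prop:almost-univ}(2), exploiting the flexibility that for $d \equiv \pm 1 \pmod m$ the small squares $1$, $(m-1)^2$, and $(m+1)^2$ all lie in $\mca_{d,m}^2$ and may be used with any multiplicity. Without loss of generality take $d = 1$, since $\mca_{d,m}^2 = \mca_{-d,m}^2$. Given $n \ge 2m(m-1)^2$, first choose $r$ with $4 \le r \le Mm + 3$ and $r \equiv n \pmod m$ (plus the residue condition of Proposition~\ref{prop:almost-univ}(1) when $m$ is even). Designate four of the $r$ variables as Cauchy variables, and let the remaining $r-4$ take prescribed values in $\{-1,0,+1\}$, with $p$ equal to $-1$ and $q$ equal to $+1$ (so $p + q \le r - 4$).

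A direct computation reduces $n = \sum_{i=1}^r (mx_i + 1)^2$ to
\[
\tilde t := \frac{n - r}{m} = m A' + 2 B', \qquad A' := A + p + q, \ B' := B + q - p,
\]
where $(A, B) = (\sum x_i^2, \sum x_i)$ over the four Cauchy variables is subject to Lemma~\ref{lem:GenCauchy}. Setting $\sigma = p + q$ and $\tau = p - q$ (with $|\tau| \le \sigma \le r-4$ and $\sigma \equiv \tau \pmod 2$), back-substitution yields
\[
4A - B^2 = \bigl(4A'_k - (B'_k)^2\bigr) - 4\sigma - 2 B'_k \tau - \tau^2,
\]
where $(A'_k, B'_k)$ runs over the integer solutions of $\tilde t = mA' + 2B'$, parametrized by a single integer $k$. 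Thus the problem becomes a three-parameter search over $(k, \sigma, \tau)$, vastly richer than the one-parameter $k$-search of Proposition~\ref{prop:almost-univ}(2). Observe also that the parity condition $A \equiv B \pmod 2$ is equivalent to $A' \equiv B' \pmod 2$, independent of $(\sigma, \tau)$.

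The goal is to show some $(k, \sigma, \tau)$ gives $4A \ge B^2$ together with $4A - B^2 \ne 4^\alpha(8\beta + 7)$. The hypothesis $\tilde t \ge 2(m-1)^2 - O(1)$ ensures a range of $k$ admit $(A'_k, B'_k)$ in or near the elliptic region $4A' \ge B'^2$, so after the $(\sigma, \tau)$-shift the non-negativity is easy to arrange. Since $4A - B^2$ is a genuine quadratic in $(\sigma, \tau)$ whose values move through every residue class modulo $8$ as $(\sigma, \tau)$ varies over the admissible range, one can steer it out of the forbidden class $4^\alpha(8\beta + 7)$. The main obstacle is reconciling this search with the parity constraint modulo $M$ from Proposition~\ref{prop:almost-univ}(1): the verification splits into the three cases $m \equiv 1 \pmod 2$, $m \equiv 2 \pmod 4$, and $m \equiv 0 \pmod 4$ matching the definition of $M$, with residue bookkeeping modulo $8$ for $4A - B^2$ analogous to (but simpler than) the case analysis of Proposition~\ref{prop:almost-univ}(2). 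The extra parameters $(\sigma, \tau)$ — absent in Proposition~\ref{prop:almost-univ}(2) — furnish precisely the additional maneuvering room that lowers the required threshold from order $m^4$ all the way down to $2m(m-1)^2$.
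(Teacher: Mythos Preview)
Your overall plan---reserve four ``Cauchy'' variables and let the remaining $r-4$ take values in $\{-1,0,1\}$---is exactly the framework of the paper's proof. The executions differ sharply, however. Instead of your three-parameter search over $(k,\sigma,\tau)$, the paper \emph{fixes} the Cauchy sum $B=1$ once and for all and uses the spare variables only to arrange that $A=a$ is an odd positive integer; since then $4A-B^{2}=4a-1\equiv 3\pmod 8$, the forbidden class $4^{\alpha}(8\beta+7)$ is avoided automatically and Lemma~\ref{lem:GenCauchy} applies with no residue bookkeeping at all. Concretely, the attainable adjustments $\sum_{i\ge 5}\frac{1}{e}(mx_i^{2}+2x_i)$ with $x_i\in\{0,\pm1\}$ are packaged into an explicit set $S$, and one checks that some $s\in S$ makes $a=(\tilde t-2-es)/m$ odd and positive once $n\ge 2m(m-1)^{2}$.

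Your sketch has a genuine gap that cannot be repaired at the stated threshold. Because the congruence conditions determine $r$ uniquely in $[4,Mm+3]$, it may happen that $r=4$; then $\sigma=\tau=0$ is forced and the argument collapses to the one-parameter search of Proposition~\ref{prop:almost-univ}(2), which requires $n$ of order $m^{4}$, not $2m(m-1)^{2}$. Worse, for odd $m$ this obstruction is intrinsic to the statement itself: take $m=101$ and $n=2m(m-1)^{2}+4=2020004$. Here $r\equiv n\equiv 4\pmod{101}$ forces $r=4$, and the only integer pair with $101A+2B=20000$ and $4A\ge B^{2}$ is $(A,B)=(198,1)$, which violates $A\equiv B\pmod 2$. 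Hence this $n$ is \emph{not} a sum of at most $m+3$ squares from $\mca_{1,101}^{2}$, so the lemma is false as stated for odd $m$. (The paper's own chosen range $m+4\le r\le Mm+3$, which is empty when $M=1$ and too short to pin down $r$ when $M\ge 2$, reflects the same oversight.) What is actually provable, and is all that Theorem~\ref{thm:su-main} requires, is the conclusion with a slightly larger cap such as $r\le 2m+3$ for odd $m$: choosing $r\in[m+4,2m+3]$ guarantees at least $m$ spare variables, after which the paper's ``$B=1$, $A$ odd'' device goes through cleanly. Even granting that correction, your proposal remains only a plan: the assertion that $4A-B^{2}$ sweeps out every residue class modulo $8$ is not proved, nor is it reconciled with the simultaneous requirement $4A\ge B^{2}$ (for $\tilde t\approx 2m^{2}$ the admissible $B'$-interval has length $\approx 4\sqrt{2m}$ while the step is $m$, so for large $m$ there may be at most one admissible $k$). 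The paper's trick removes the need for any such analysis.
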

\begin{proof}
    Write $n=mt+r_0$ with unique integers $t,r_0\in\z$ such that $0\le r_0 < m$. Let $r$ be the unique integer satisfying $m+6-e\le r\le Mm+5-e$ and $t\equiv \frac{r-r_0}{m}\Mod{M}$. Note that the equation
	\[
		n=\sum_{i=1}^r (mx_i+1)^2 
	\]
    has a solution $x_i\in\z$ if and only if
	\[
		\tilde{t}:=t-\frac{r-r_0}{m}=m\left(\sum_{i=1}^4 x_i^2\right) + 2\left(\sum_{i=1}^4 x_i\right) + \sum_{i=5}^r (mx_i^2+2x_i).
	\]
    
	Define the set
    \[
        S=\begin{cases}
            \{k(m-2) \mid 1\le k \le m\}\cup\{ k(m+2)\mid 0 \le k \le m-1\}& \text{if } m\equiv1\Mod{2},\\
            % \{k(\frac{m}{2}+1) \mid 0\le k \le \frac{m}{2}-1\}& \text{if } m\equiv2\Mod{4},\\            
            \{ k(\frac{m}{2}+1) \mid 0\le k \le m-1\}& \text{if } m\equiv0\Mod{2}.            
        \end{cases}
    \]
    One may observe that
	\begin{enumerate}
	\item [(1)] $S\subseteq \left\{\sum_{i=5}^{Mm+5-e} \frac{1}{e}(mx_i^2+2x_i) \mid x_i \in \{0,\pm1\}\right\}$,
	\item [(2)] there exists $s\in S$ such that $\frac{\tilde{t}}{e}\equiv \frac{2}{e}+s \Mod{\frac{m}{e}}$ and $\frac{\tilde{t}}{e}\not\equiv \frac{2}{e}+s\Mod{2\frac{m}{e}}$.
	\end{enumerate} 
	The second property follows since $S$ form a complete system of residues modulo $2\frac{m}{e}$. Choosing $s\in S$ satisfying (2), we set 
    \[
        a:=\frac{\tilde{t}/e-(2/e+s)}{m/e},
    \]
    which is an odd positive integer since 
    \[
        \tilde{t}-2-es=\frac{n-r}{m}-2-es\ge 2(m-1)^2-\frac{4m+3}{m}-2-(m-1)(m+2)>0.
    \]
    
    By Lemma \ref{lem:GenCauchy}, there exist $x_1,x_2,x_3,x_4\in\z$ such that
	\[
			x_1^2+x_2^2+x_3^2+x_4^2=a \quad \text{and} \quad x_1+x_2+x_3+x_4=1.
	\]
	Let $x_5,\ldots,x_r\in \{0,\pm1\}$ be such that $es=\sum_{i=5}^r (mx_i^2+2x_i)$. Then
	\[
		\sum_{i=1}^r (mx_i+1)^2 = m\left( m\sum_{i=1}^r x_i^2 + 2\sum_{i=1}^r x_i\right)+r = m(ma+2+es)+r = m\tilde{t}+r=n.
	\]
    This completes the proof.
\end{proof}

\begin{proof}[Proof of Theorem \ref{thm:su-main}]
    By Lemma \ref{lem:su-necessary}, we may assume that $d\equiv\pm1\Mod{m}$. For $m=2,3,4$, we have
    \[
        \su[\mca_{d,2}]=\su[\mca_{d,4}]=\su[\z\setminus2\z]=10 \quad \text{and} \quad \su[\mca_{d,3}]=\su[\z\setminus3\z]=6 \ 
    \]
    by \cite[Propositions 2.3 and 2.4]{KO}. 
    
    For $m\ge8$, note that $m^2-2m\ge 4m+3$. Hence by Lemmas \ref{lem:rep-small} and \ref{lem:rep-1,m-effective}, every positive integer can be written as a sum of at most $m^2-2m$ squares in $\mca_{d,m}^2$. Thus, we have $\su\le m^2-2m$.  On the other hand, By Lemma \ref{lem:su-necessary} and Theorem \ref{thm:asu-main}, $\su\ge \max(m^2-2m,\asu)=m^2-2m$. Therefore, $\su=m^2-2m$.

    If $m=5$ or $m=7$, then every integer $n\ge \frac{1}{5}m^4+m$ is a sum of at most $m+4$ squares in $\mca_{d,m}^2$ by Proposition \ref{prop:almost-univ}. For $m=7$, a direct check shows that every positive integer $n<\frac{1}{5}\cdot7^4+7=487.2$ is a sum of at most $m^2-2m=35$ squares in $\mca_{d,m}^2$, hence $\su[\mca_{d,7}]=m^2-2m$. On the other hand, for $m=5$, all positive integers $n\le \frac{1}{5}\cdot5^4+5=130$ are sums of at most $m^2-2m=15$ squares in $\mca_{d,m}^2$, except for $n=31$. Thus, $\su[\mca_{d,5}]=16$.

    Finally, for $m=6$, we show that $\su=26$. By Lemma \ref{lem:su-necessary} and Theorem \ref{thm:asu-main}, we have 
    \[
        \su[\mca_{d,6}]\ge \max(m^2-2m,4m+2)=26.
    \]
    On the other hand, every positive integer $n\ge 2m(m-1)^2=300$ is a sum of at most 27 squares in $\mca_{d,6}^2$ by Lemma \ref{lem:rep-1,m-effective}, and a direct check confirms this also holds for all $n<300$. Therefore, to prove that $\su[\mca_{d,6}]=26$, it suffices to show that every $n=mt+3$ with $t\equiv 0\Mod{4}$ can be written as a sum of three squares in $\mca_{d,6}^2$. 
    
    Let $n=24t'+3$ with $t'\in\n$. Since $n\equiv 3\Mod{8}$, it is representable as a sum of three squares of odd integers. We claim that there exists a representation $n=x^2+y^2+z^2$ with $x,y,z\in\z$ such that $(xyz,6)=1$. If $n\not \equiv0\Mod{9}$, then any such representation satisfies $(xyz,3)=1$. If $n\equiv0\Mod{9}$, then by \cite[Lemma 2.2]{KO}, there exists a solution $x,y,z\in\z$ with $(xyz,3)=1$. Since all $x,y,z$ are odd, it follows that  $(xyz,6)=1$. By changing signs if necessary, we may assume that $x,y,z\in\mca_{d,6}$, hence $n$ is a sum of three squares in $\mca_{d,6}^2$. This completes the proof.
\end{proof}

\end{document}